\newtheorem{corollary}{Corollary}
\newtheorem{theorem}{Theorem}[section]
\newtheorem{remark}{Remark}
\newtheorem{Solution}{Solution}[section]
\newtheorem{example}{Example}[section]
\newtheorem{Note}{Note}
\newtheorem{lemma}{Lemma}[section]
\numberwithin{equation}{section}
\newcommand{\cmmnt}[1]{\ignorespaces}
\providecommand{\keywords}[1]
{
  \small	
  \textbf{\textit{Keywords---}} #1
}
\author{}
\date{January 2023}
\begin{document}
\Large
\begin{center}
\textbf{Estimation of Tsallis entropy for exponentially distributed several populations} 

\hspace{5pt}

\large
Naveen Kumar$^1$, Ambesh Dixit$^2$, Vivek Vijay$^{3}$ \\

\hspace{2pt}

\small  
$^{1)}$ Department of Mathematics, IIT Jodhpur, Rajasthan, India\\
\href{mailto:kumar.248@iitj.ac.in }{kumar.248@iitj.ac.in} \\
$^{2)}$ Department of Physics, IIT Jodhpur, Rajasthan, India\\
\href{mailto:vivek@iitj.ac.in }{ambesh@iitj.ac.in} \\
$^{3)}$ Department of Mathematics, IIT Jodhpur, Rajasthan, India\\
\href{mailto:ambesh@iitj.ac.in }
{vivek@iitj.ac.in}\\


\end{center}
\begin{abstract}
We study the estimation of Tsallis entropy of a finite number of independent populations, each following an exponential distribution with the same scale parameter and distinct location parameters for $q>0$. We derive a Stein-type improved estimate, establishing the inadmissibility of the best affine equivariant estimate of the parameter function. A class of smooth estimates utilizing the Brewster technique is obtained, resulting in a significant improvement in the risk value. We computed the Brewster-Zidek estimates for both one and two populations, to illustrate the comparison with best affine equivariant and Stein-type estimates. We further derive that the Bayesian estimate, employing an inverse gamma prior, which takes the best affine equivariant estimate as a particular case. We provide a numerical illustration utilizing simulated samples for a single population. The purpose is to demonstrate the impact of sample size, location parameter, and entropic index on the estimates.

\end{abstract}

\keywords{Tsallis entropy, best affine equivariant estimate, Stein's estimate, Brewster-Zidek estimate, Bayesian estimate}

\section{Introduction}
The concepts of entropy as a degree \cmmnt{of disorder in a physical system}of disorder within a physical system was first introduced in the $19$th century by Clausius, Boltzmann, and Gibbs in the domains of thermodynamics and statistical mechanics. Shannon\cite{shannon1948mathematical} significantly enhanced the concept by linking it to the communication theory, considering it as a information indicator. The use of Shannon entropy in data analysis has been the subject of a substantial body of research, which has been published extensively. A collection of such methodologies and their applicability for the risk management, portfolio selection, categorical data analysis and econometrics can be seen in {\cite{eshima2020statistical_bookOnCategorical_data_analysis,maasoumi1993compendium_Econometric_methods_based_on_entropy2,golan2008information_Econometric_methods_based_on_entropy1,philippatos1972entropy_marketrisk_and_selecting_efficientportfolio,pola2016entropy_portfolio}}. Tsallis entropy{\cite{tsallis1988possible}} $S_q$, is a generalization of Shannon entropy and \cmmnt{for a continuous random variable $X$}considering a random variable $X$ that is continuous, it is computed as 
\begin{equation}
    S_{q}(X)=\frac{1}{q-1}\left(1-\int_{-\infty}^{\infty}p^{q}(x)dx \right),
\end{equation}
where $p(x)$ is the probability density function of $X$ and $q$ is a non-unit real number. Recently, Tsallis entropy has been widely used in data analysis, with applications in areas, like financial markets{\cite{gurdgiev2016financialmarket,wang2018financialmarket}}, image thresholding{\cite{bhandari2015imagethresholding,de2004imagethresholding,sarkar2013imagethresholding}}, pattern recognition{\cite{ribeiro2017patternrecognition,zhang2008patternrecognition}}, and signal processing{\cite{beenamol2012waveletsignalprocessing,singh2020signalprocessing}} etc.

\cmmnt{Over the last two decades, considerable amount of work has been}Over the past two decades, a substantial amount of work has been devoted to estimating the entropy function. Several commonly employed non-parametric estimators of Shannon entropy include the precise local expansion based estimator{\cite{paninski2003estimationShannonEntropyNonparametricestimation1}}, the weighted affine combination of estimators{\cite{sricharan2013ensembleestimationShannonEntropyNonparametricestimation2}}, and the Bayes estimator{\cite{holste1998bayesBayesEstimatorfortsallis}} of entropy. One noteworthy observation of estimation is that parametric estimation \cmmnt{exhibits superior performance compared to other methods, particularly in terms of}outperforms other methods in terms of risk, when applied to well-know population scenarios. For a number of multivariate distributions, including normal, exponential, and logistic, {\cite{ahmed1989entropy}} provides entropy expressions and parametric estimates.
Maya et al.{\cite{gupta2010parametricentropyestimate}} provide the Bayesian estimates of Shannon entropy for uniform, Gaussian, Wishart, inverse Wishart distributions and the performance is increased in comparison to both maximum likelihood and nonparametric estimators. The maximum likelihood method is employed to compute the estimate{\cite{shrahili2022estimation}} of entropy metric for an Log-Logistic distribution using progressive type II censoring. The estimation of entropy for generalized exponentially distributed record values, as derived in reference{\cite{chacko2018estimation}}, involves the utilization of both maximum likelihood and Bayesian approaches. A comprehensive inference procedure is outlined, accompanied by an illustrative application to a real dataset. \cmmnt{Kang et al.\cite{kang2012estimation} derived the estimators for the entropy function of the double exponential distribution using both maximum likelihood estimation and approximation maximum likelihood estimation.}Kang et al.{\cite{kang2012estimation}} obtained the entropy estimators for the double exponential distribution through both maximum likelihood estimation and approximate maximum likelihood estimation. The derivation is based on multiply Type-II censored data, and the study compares the parametric and non-parametric estimates.

In $1974$, Brewster et al.{\cite{brewster1974improving}} introduced a technique, inspired by Stein's methodology, to improve the best affine equivariant estimator(BAEE) under the bowl-shaped error functions. Mishra et al.{\cite{misra2005estimation}} employ the Brewster technique to improve the BAEE for the entropy of multivariate normal distribution and prove that BAEE is inadmissible. Also, the Brewster-type estimator under the quadratic loss function is generalised Bayes. Kayal et al.{\cite{kayal2011estimating}} derived \cmmnt{the BAEE for the entropy of both scale parameter exponential and location-scale parameter exponential distributions under the linex loss function.}the BAEE for both scale parameter exponential and location-scale parameter exponential population entropy value under the linear exponential loss function. Additionally, \cmmnt{the study presents the derivation of Stein and Brewster-type improved estimates over the BAEE.}the study shows the dominance of Stein and Brewster-type estimates improving the BAEE. The estimation problem of Renyi entropy for multiple shifted exponential distributions having equal location parameter and unequal scale parameters is addressed in reference {\cite{kayal2015estimating}}. \cmmnt{They derived the uniformly minimum variance unbiased estimator, BAEE, and for a single population, computed an estimate that dominated BAEE under quadratic loss function, using the Brewster technique.}Also, The derivation of UMVUE, BAEE, and for one population is presented. It provides an estimate that dominated BAEE under quadratic loss function, using the Brewster technique. The identical problem of estimating Renyi entropy for multiple exponential populations with varying scale parameters under the linex loss function is investigated in reference {\cite{kayal2017estimating}}. Petropoulos et al.{\cite{petropoulos2020improved}} computed the equivariant estimate for the Shannon entropy of the mixture model with the square error loss function for the exponential distribution. The study further established that the generalized Bayes estimator coincides with the Brewster estimator. Recently, BAEE{\cite{patra2020estimating}} of the function $\ln{\sigma}$, of scale parameter of exponential distribution \cmmnt{with unknown location parameter} under an arbitrary location invariant bowl-shape loss function is shown to be inadmissible and the Kubokawa approach is also discussed.

Our focus is on computing the equivariant estimation of Tsallis entropy for multiple populations following exponential distributions. \cmmnt{To the best of our knowledge,}To our knowledge, this specific problem has not yet been addressed in the literature so far. The problem of estimating Tsallis entropy has not been well investigated from various perspectives. A notable contribution to non-parametric estimation of tsallis entropy includes the application of the sample spacing technique as employed in {\cite{wachowiak2005estimation}}, the development of the balanced estimator{\cite{Bonachela_2008NONparametricEstimation3}} that strikes a compromise between low bias and small statistical errors, and the formulation of the plug-in estimator{\cite{NONparametricEstimation1}} for \cmmnt{second-order} the Tsallis entropy with index value $q=2$, based on the kernel density estimator. Presently, there has been an increasing interest among researchers in the parametric estimation of Tsallis entropy. Some recent work includes Tsallis entropy estimation of inverse Lomax distribution for multiple censored data{\cite{bantan2020estimation}}, for kumaraswamy distribution using beta function{\cite{al2021estimation}}, for the power function distribution in the presence of outliers{\cite{hassan2022estimation}} and for Log-Logistic distribution with progressive type II censoring{\cite{shrahili2022estimation}}.

In this paper, the BAEE is derived, for the Tsallis entropy of exponentially distributed populations, each with the same scale parameter for $q>0$, under the strictly bowl-shaped quadratic loss function. We study the Stein-type estimator, which provides a notable improvement over BAEE for suitable sample values. The Stein-type estimator exhibits non-smooth behaviour with respect to the estimated parameter. As discussed in {\cite{brewster1974improving}}, a class of improved smooth estimators is derived to improve the performance of the BAEE based on the quadratic loss function. This study is inspired from the recent works on the improvement of the BAEE for various applications in literature indicated above.

The following segments of the paper are arranged as follows: In Section \ref{The Best Affine Equivariant Estimator section1}, we derive the BAEE of $1/\sigma^{k(q-1)},$ for $q>0$, under the quadratic loss function. A class of improved Stein-type estimate and its existence is presented in Section \ref{Stein-type improved estimate section}. In Section \ref{Smooth improved estimate section4}, we apply the orbit-wise risk reduction Brewster-Zidek technique to obtain the improved smooth estimate over BAEE. The Bayesian estimate is derived by considering the prior, inverse gamma distribution, of the scale parameter. A study based on the simulation is provided to illustrate the estimators performance in Section \ref{numerical comparisons section6}. Finally, the conclusion of the paper is in Section \ref{conclusion section7}.


\section{BAEE for the Tsallis entropy of multiple exponential populations}\label{The Best Affine Equivariant Estimator section1}
In this section, we derive the Tsallis entropy for $k(\ge1)$ independent populations $P_i(i=1,2,...,k)$, where each population is distributed with a probability density function,

\begin{equation} \label{twoparaExponentialdistr}
p_{i}(x;u_i,\sigma)=
\begin{cases}
\frac{1}{\sigma}e^{-\frac{x-u_i} {\sigma}}, & \text{if $x>u_i$,} \\
0, & \text{otherwise.}
\end{cases}
\end{equation}
Let us consider a random sample $\{x_i^1, x_i^2,..., x_i^n\}$ drawn from the population $P_i$, it can be observed that the Tsallis entropy for the corresponding distribution exists finitely if and only if $q>0$, and given by 
\begin{equation} \label{tsallisentropyforexponentialdistr}
    S_q(P_i)=\frac{1}{q-1}\left( 1-\frac{1}{q{\sigma}^{q-1}}\right).
\end{equation}
It is well known that the Tsallis entropy exhibits $q$-additivity for independent distributions and that the measure of randomness is unaffected by the location parameter, our first step is to determine the Tsallis entropy for $k$ number of independent exponential distributions with same scale and distinct location parameter.
\begin{lemma}
    Given $k$ independent exponential distributions as in equation(\ref{twoparaExponentialdistr}), the joint Tsallis entropy is $S_q(P_1,P_2,...,P_k)=\frac{1}{q-1}\left(1-{\left[1+(1-q)S_q(P_i)\right]}^k\right)$.
\end{lemma}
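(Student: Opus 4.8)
The plan is to go straight to the defining integral of Tsallis entropy and exploit independence. Since $P_1,\dots,P_k$ are mutually independent, the joint density is $\prod_{i=1}^{k} p_i(x_i;u_i,\sigma)$, hence
\[
\int_{\mathbb{R}^k}\Big(\prod_{i=1}^{k}p_i(x_i;u_i,\sigma)\Big)^{q}\,dx_1\cdots dx_k=\prod_{i=1}^{k}\int_{u_i}^{\infty}p_i(x;u_i,\sigma)^{q}\,dx .
\]
Each one-dimensional factor is elementary: $\int_{u_i}^{\infty}\sigma^{-q}e^{-q(x-u_i)/\sigma}\,dx=\tfrac{1}{q\sigma^{q-1}}$, which is finite exactly when $q>0$ and is independent of $u_i$, reconfirming the location invariance noted above.

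The second step is to recognize this factor inside the single-population expression. From (\ref{tsallisentropyforexponentialdistr}), $1+(1-q)S_q(P_i)=1-\big(1-\tfrac{1}{q\sigma^{q-1}}\big)=\tfrac{1}{q\sigma^{q-1}}$, so $\int_{u_i}^{\infty}p_i^{q}\,dx=1+(1-q)S_q(P_i)$ for every $i$, and this common value is the same across $i$ precisely because the scale $\sigma$ is shared. Substituting into $S_q(P_1,\dots,P_k)=\tfrac{1}{q-1}\big(1-\prod_{i=1}^{k}\int p_i^{q}\,dx\big)$ gives $S_q(P_1,\dots,P_k)=\tfrac{1}{q-1}\big(1-[1+(1-q)S_q(P_i)]^{k}\big)$, as claimed.

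I do not anticipate a genuine obstacle; the only two points meriting a line of care are that the factorization of the $k$-fold integral uses independence of the $P_i$ (not merely the product form of their supports), and that collapsing the answer into a single $k$-th power relies on all populations sharing $\sigma$ — with distinct scales one would instead obtain the product $\prod_{i=1}^{k}[1+(1-q)S_q(P_i)]$. Equivalently, the identity is just the $q$-additivity of Tsallis entropy mentioned before the statement, applied iteratively to the $k$ independent components.
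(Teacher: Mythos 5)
Your proof is correct, and it takes a more direct route than the paper's. The paper invokes the $q$-additivity rule iteratively, writing $\Delta=\frac{1}{q\sigma^{q-1}}$ and building up $S_q(P_1,\dots,P_k)=\frac{1}{q-1}(1-\Delta)(1+\Delta+\cdots+\Delta^{k-1})$, which telescopes to $\frac{1}{q-1}(1-\Delta^k)$; you instead go straight to the defining integral, factor the $k$-fold integral of the joint density into $\prod_{i=1}^{k}\int p_i^{q}$ by independence, and identify each factor as $1+(1-q)S_q(P_i)=\frac{1}{q\sigma^{q-1}}$. The two arguments are mathematically equivalent (the $q$-additivity the paper uses is itself a consequence of the factorization you exploit), but yours buys two things: it avoids the induction/geometric-series step entirely, and it makes transparent that with distinct scale parameters the answer would be $\frac{1}{q-1}\bigl(1-\prod_{i=1}^{k}[1+(1-q)S_q(P_i)]\bigr)$, the $k$-th power arising only because the common $\sigma$ makes all factors equal. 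Your verification that each one-dimensional integral is finite exactly when $q>0$ also supplies the existence claim the paper states just before the lemma. The only cosmetic caveat is that the statement writes $S_q(P_i)$ with an unquantified $i$; your observation that the factor is the same for every $i$ is precisely what licenses that notation.
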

\begin{proof}
Let $\Delta=\frac{1}{q{\sigma}^{q-1}}$, then for equation(\ref{tsallisentropyforexponentialdistr}), we can write
\begin{align*}
 S_q(P_1)&=\frac{1}{q-1}(1-\Delta). 
 \end{align*}
From the $q-$additivity of Tsallis entropy, we get
 \begin{align*}
 S_q(P_1,P_2)&=\frac{1}{q-1}(1-\Delta)\left(\Delta+1 \right),\\
 S_q(P_1,P_2,P_3)&=\frac{1}{q-1}(1-\Delta)\left(\Delta^2+\Delta+1 \right), \\
 &\vdots\\
    S_q(P_1,P_2,...,P_k)&=\frac{1}{q-1}(1-\Delta)(\Delta^{k-1}+\Delta^{k-2}+\cdots+\Delta+1)\\
    &=\frac{1}{q-1}(1-\Delta^k)\\
    &=\frac{1}{q-1}\left[1-{\left(\frac{1}{{q\sigma^{q-1}}}\right)}^{k} \right]=\frac{1}{q-1}-\frac{1}{(q-1){q^k}{\sigma}^{k(q-1)}}\\
    &=\frac{1}{q-1}\left(1-{\left[1+(1-q)S_q(P_i)\right]}^k\right).
\end{align*} 
\end{proof}

\noindent This motivate us to consider the equivariant estimation problem for the function $\Theta(\sigma)=1/\sigma^{k(q-1)}$ for a fixed value of $k$ and $q$, in order to estimate tsallis entropy for a given sample, under the quadratic error function $L(t)={(t-1)}^2$, which is a strictly bowl-shaped function with minimum at $t=1$, can be expressed as,
\begin{equation} \label{lossfunction}
    L\left(\frac{\delta(X)}{\Theta(\sigma)}\right)={\left(\left(\frac{\delta(X)}{\Theta(\sigma)}\right)-1 \right)}^2.
\end{equation}

\noindent Based on the $i$-th random sample $\{x_i^1, x_i^2,..., x_i^n\}$, for $({\mu}_{i},\sigma)$, $(X_{i}^{(1)},Y_{i})$ forms a complete and sufficient statistics, where, $X_{i}^{(1)}=min_{1\le j \le n}X_{i}^{j}$ and $Y_{i}=\sum_{j=1}^{n}X_{i}^{j}$. $X_{i}^{(1)}$ and $Z_{i}$ are distributed independently, where $Z_{i}=Y_{i}-nX_{i}^{(1)}$. Also $X_{i}^{(1)}$ is exponentially distributed with parameters (${\mu}_i $,$\sigma/n$) and $2Z_{i}/{\sigma}$ follows $\chi_{2(n-1)}^{2}$ (see page no. 43 of {\cite{lehmann}}). Define $T=\sum_{i=1}^{k}\sum_{j=1}^{n}\left(X_i^{j}-X_{i}^{(1)} \right)$ then for $(\Bar{\mu},\sigma)$, $(X^{(1)},T)$ forms a complete and sufficient statistics where $\Bar{\mu}=(\mu_1,\cdots,\mu_k)$. Also $X^{(1)}$ and $T$ are independently distributed. Utilizing the additivity characteristic of the chi-square distribution, we can infer that $2T/\sigma$ follows a chi-square distribution with $2k(n-1)$ degrees of freedom. The plug-in estimator of $\Theta(\sigma)$, employing the maximum likelihood estimator(MLE) of $\sigma$, is expressed as ${(kn)}^{k(q-1)}/{T}^{k(q-1)}$.

Consider the affine transformations, $h_{r,s_i}(x_{ij})=rx_{i}^{j}+s_i,$ where $r>0,s_i\in \mathbb{R},$ $ j=1,...,n$, and $i=1,...,k$. Let $\Bar{s}=(s_1,...,s_k)$ and $g_{r,\Bar{s}}=(g_{r,s_1},...,g_{r,s_k})$, under the transformations $g_{r,\Bar{s}}$, we observe that, 
\begin{equation}\label{equivariant condition}
  (\Bar{\mu}, \sigma)\rightarrow (r\Bar{\mu}+\Bar{s},r\sigma), \ \ \ \ (X^{(1)},T)\rightarrow (rX^{(1)}+\Bar{s},rT).  
\end{equation}
\noindent Accordingly, we get $\Theta(\sigma)=1/{\sigma}^{k(q-1)}\rightarrow 1/r^{k(q-1)}{\sigma}^{k(q-1)}$. The loss function (\ref{lossfunction}), if ${\delta} \rightarrow {\delta}/r^{k(q-1)}$, is invariant under the group $G_{r,\Bar{s}}$ of affine transformation. The obtained affine equivariant estimator has the 
general form
\begin{equation}\label{BAEEgeneral form}
    {\delta}_{c}(X^{(1)},T)=\frac{c}{T^{k(q-1)}},
\end{equation}
\noindent where $c$ is any constant. The density functions of random variable $T$, needed to calculate the expectations in subsequent results is given by
\begin{equation}
    f_{T}(t)=\frac{t^{k(n-1)-1} e^{-\frac{t}{\sigma}}}{{\sigma}^{k(n-1)}{\Gamma{(k(n-1))}}}, \text{ \ \ \ $t>0$,} 
\end{equation}

\noindent In the next theorem, the BAEE
of $\Theta(\sigma)$ is derived.
\begin{theorem}\label{BAEE}
    For the quadratic error function(\ref{lossfunction}), the BAEE of $\Theta(\sigma)$ exists for $q<(n+1)/2$, and is ${\delta}_{c_0}(X^{(1)},T)$, where $c_{0}=\frac{\Gamma(k(n-1)+k(1-q))}{\Gamma(k(n-1)+2k(1-q))}$.
\end{theorem}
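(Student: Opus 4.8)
The plan is to optimize over the one-parameter family of affine equivariant estimators $\delta_c(X^{(1)},T)=c/T^{k(q-1)}$ isolated in~(\ref{BAEEgeneral form}) by choosing the constant $c$ that minimizes the risk. First I would use the equivariance recorded in~(\ref{equivariant condition}) to observe that the risk $R(\sigma,\delta_c)=E\bigl[(\delta_c/\Theta(\sigma)-1)^{2}\bigr]$ does not depend on $(\bar{\mu},\sigma)$: putting $W=T/\sigma$, the density $f_T$ shows $W\sim\mathrm{Gamma}(k(n-1),1)$, and $\delta_c/\Theta(\sigma)=c\,\sigma^{k(q-1)}/T^{k(q-1)}=c\,W^{-k(q-1)}$, so that
\begin{equation*}
R(\sigma,\delta_c)=c^{2}\,E\!\left[W^{-2k(q-1)}\right]-2c\,E\!\left[W^{-k(q-1)}\right]+1 .
\end{equation*}
Provided the coefficient of $c^{2}$ is finite (it is automatically strictly positive, being a moment of the positive variable $W$), this is a strictly convex parabola in $c$, uniquely minimized at
\begin{equation*}
c_{0}=\frac{E\!\left[W^{-k(q-1)}\right]}{E\!\left[W^{-2k(q-1)}\right]} .
\end{equation*}

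Next I would evaluate the two negative moments. A direct integration against $f_T$ (with $\sigma=1$) gives $E[W^{p}]=\Gamma\!\bigl(k(n-1)+p\bigr)/\Gamma\!\bigl(k(n-1)\bigr)$ whenever $k(n-1)+p>0$. Taking $p=k(1-q)$ and $p=2k(1-q)$ and forming the ratio, the common factor $\Gamma(k(n-1))$ cancels and one is left with
\begin{equation*}
c_{0}=\frac{\Gamma\!\bigl(k(n-1)+k(1-q)\bigr)}{\Gamma\!\bigl(k(n-1)+2k(1-q)\bigr)},
\end{equation*}
which is the asserted constant (using $k(n-1)+k(1-q)=k(n-q)$ and $k(n-1)+2k(1-q)=k(n+1-2q)$).

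Finally I would pin down the range of existence. The numerator moment is finite iff $k(n-1)+k(1-q)>0$, i.e.\ $q<n$, while the denominator moment — equivalently the coefficient of $c^{2}$ in the risk — is finite iff $k(n-1)+2k(1-q)>0$, i.e.\ $q<(n+1)/2$; since $(n+1)/2\le n$, the latter is the operative restriction, and under it both moments are finite and strictly positive, so $c_{0}$ is well defined, strictly positive, and is genuinely the minimizer, establishing that $\delta_{c_{0}}$ is the BAEE. I expect the only real obstacle to be precisely this finiteness bookkeeping: one must ensure the risk parabola actually attains a minimum (i.e.\ $E[W^{-2k(q-1)}]<\infty$, which fails once $q\ge(n+1)/2$) rather than decreasing without bound in $c$, and that this cut-off — not some interval around $q=1$ — is the true domain of validity, since for $q\le1$ the exponents are nonnegative and the moments are trivially finite, so the constraint only becomes active for larger $q$. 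The remaining steps (the gamma integral and the cancellation above) are routine.
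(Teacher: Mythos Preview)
Your proposal is correct and follows essentially the same approach as the paper: expand the quadratic risk of $\delta_c$ as a parabola in $c$, minimize to obtain $c_0$ as a ratio of moments of $T$ (equivalently of $W=T/\sigma$), and evaluate those moments via the gamma/chi-square distribution of $T$. If anything, your treatment of the existence condition $q<(n+1)/2$ is more explicit than the paper's, which simply asserts the restriction without tracing it back to the finiteness of $E[W^{2k(1-q)}]$.
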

\begin{proof}
    The risk value of the estimator is given by
    \begin{equation*}
        R(\sigma,\delta)=E_{\overline{\mu},\sigma}\left(L\left(\frac{\delta}{\Theta(\sigma)}\right)\right).
    \end{equation*}
    Using the general form of equivariant estimator and from equivariant property, simplification gives
    \begin{equation}\label{lossfunctionvalue}
    \begin{split}
        R(\sigma,\delta)&=E_{\overline{0},1}{\left(c{T}^{k(1-q)}-1 \right)}^{2} \\
        &= {c^2}E_{\overline{0},1}(T^{2k(1-q)})-{2c}E_{\overline{0},1}(T^{k(1-q)})+1.
        \end{split}
    \end{equation}
    Here $\overline{0}=(0,0,...,0)$ is used. Differentiating right side with respect to $c$, gives isolated point
    \begin{equation*}
    \begin{split}
        c_0&=\frac{E_{\overline{0},1}{(T}^{k(1-q)})}{E_{\overline{0},1}{(T}^{2k(1-q)})}\\
        &=\frac{\Gamma(k(n-1)+k(1-q))}{\Gamma(k(n-1)+2k(1-q))}.
         \end{split}
         \end{equation*}
We utilize the moments of $\frac{2T}{\sigma}\sim {\chi_{(2k(n-1))}^{2}}$. This completes the proof. 
\end{proof}
\noindent The risk function of the equivariant estimator is equal for all parameter values. The risk value of minimum risk equivariant estimator ${\delta}_{c_0}(X^{(1)},T)$, is computed in the following corollary
\begin{corollary}
    For the quadratic error function (\ref{lossfunction}), the BAEE ${\delta}_{c_0}(X^{(1)},T)$ has the risk value $1-\frac{{\left(\Gamma(k(n-1)+k(1-q))\right)}^2}{\Gamma(k(n-1)+2k(1-q))\Gamma(k(n-1))}$.
\end{corollary}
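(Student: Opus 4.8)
The plan is to reuse the quadratic-in-$c$ expression for the risk already isolated in the proof of Theorem \ref{BAEE} and simply evaluate it at the optimal constant $c_0$. Recall from \eqref{lossfunctionvalue} that, for any equivariant estimator $\delta_c$,
\begin{equation*}
R(\sigma,\delta_c)=c^2\,E_{\overline 0,1}\!\left(T^{2k(1-q)}\right)-2c\,E_{\overline 0,1}\!\left(T^{k(1-q)}\right)+1 .
\end{equation*}
The key algebraic step is that at $c=c_0=E_{\overline 0,1}(T^{k(1-q)})/E_{\overline 0,1}(T^{2k(1-q)})$ one has $c_0\,E_{\overline 0,1}(T^{2k(1-q)})=E_{\overline 0,1}(T^{k(1-q)})$, so the first two terms collapse and
\begin{equation*}
R(\sigma,\delta_{c_0})=1-c_0\,E_{\overline 0,1}\!\left(T^{k(1-q)}\right)=1-\frac{\left(E_{\overline 0,1}(T^{k(1-q)})\right)^{2}}{E_{\overline 0,1}(T^{2k(1-q)})}.
\end{equation*}

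It then remains to insert the two moments. Under $(\overline\mu,\sigma)=(\overline 0,1)$ the statistic $T$ has the Gamma density $f_T$ displayed just before the theorem with $\sigma=1$ (equivalently $2T\sim\chi^2_{2k(n-1)}$), so for any exponent $a>-k(n-1)$ one has $E_{\overline 0,1}(T^{a})=\Gamma(k(n-1)+a)/\Gamma(k(n-1))$. Taking $a=k(1-q)$ and $a=2k(1-q)$ (both admissible precisely in the range $q<(n+1)/2$ guaranteeing $c_0$ exists), substituting, and cancelling one factor of $1/\Gamma(k(n-1))$ yields
\begin{equation*}
R(\sigma,\delta_{c_0})=1-\frac{\left(\Gamma(k(n-1)+k(1-q))\right)^{2}}{\Gamma(k(n-1)+2k(1-q))\,\Gamma(k(n-1))},
\end{equation*}
which is the claimed value; since this is constant in $(\overline\mu,\sigma)$, it is also the minimum risk.

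There is no real obstacle here: the argument is a one-line substitution into an identity already established, followed by routine Gamma-function moment bookkeeping. The only point requiring a word of care is recording the parameter restriction $q<(n+1)/2$, so that $k(n-1)+2k(1-q)>0$ and all the Gamma-function arguments (and the expectations) are finite; this is inherited verbatim from Theorem \ref{BAEE}.
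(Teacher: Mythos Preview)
Your proposal is correct and follows essentially the same route as the paper: evaluate the quadratic risk expression \eqref{lossfunctionvalue} at $c=c_0$ to obtain $1-\bigl(E_{\overline 0,1}(T^{k(1-q)})\bigr)^2/E_{\overline 0,1}(T^{2k(1-q)})$, then compute the two moments via the chi-square/Gamma distribution of $T$. Your write-up is slightly more explicit about the algebraic collapse at $c_0$ and about the range $q<(n+1)/2$ needed for finiteness, but there is no substantive difference in method.
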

\begin{proof}
    As $2T/{\sigma}\sim {\chi}_{2k(n-1)}^2$ and for a chi-square distributed random variable $X$ with degrees of freedom $d$, the $m$-th moments, where $m>-\frac{k}{2}$, can be expressed as $E(X^m)=\frac{2^m\Gamma(\frac{d}{2}+m)}{\Gamma(\frac{d}{2})}$. So from (\ref{lossfunctionvalue}), we get
    \begin{equation*}
    \begin{split}
        R(\sigma,\delta)&=1-\frac{{\left(E_{0,1}{(T}^{k(1-q)})\right)}^{2}}{E_{0,1}{(T}^{2k(1-q)})}\\
        &=1-\frac{{\left(\Gamma(k(n-1)+k(1-q))\right)}^2}{\Gamma(k(n-1)+2k(1-q))\Gamma(k(n-1))}.
        \end{split}
    \end{equation*}
\end{proof}
\noindent In the next corollary, the confidence level of the equivariant estimate is derived.
\begin{corollary}
    For a given level of confidence $1-{\alpha}(0<\alpha<1)$, the confidence interval for $\Theta(\sigma)$, is $\left({\pi}/{({\chi}_{k(n-1),1-\frac{\alpha}{2}} )}^{k(1-q)},{\pi}/{( {{\chi}_{k(n-1),\frac{\alpha}{2}}})}^{k(1-q)}\right)$, where ${\pi}=(2T)^{k(1-q)}$. 
\end{corollary}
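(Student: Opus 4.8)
The plan is to read off the interval from the pivotal quantity $2T/\sigma$. First I would invoke the distributional facts already established above: $2T/\sigma\sim\chi^{2}_{2k(n-1)}$, and this law depends neither on $\sigma$ nor on the location vector $\bar\mu$, so $2T/\sigma$ is a genuine pivot for $\sigma$. Writing $\chi_{k(n-1),\beta}$ for the $\beta$-quantile of the $\chi^{2}_{2k(n-1)}$ distribution, the equal-tailed identity
\[
P_{\bar\mu,\sigma}\!\left(\chi_{k(n-1),\alpha/2}<\frac{2T}{\sigma}<\chi_{k(n-1),1-\alpha/2}\right)=1-\alpha
\]
then holds for every $(\bar\mu,\sigma)$.

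The second step is inversion. Since $T>0$ and both quantiles are positive, the event inside the probability is equivalent to
\[
\frac{2T}{\chi_{k(n-1),1-\alpha/2}}<\sigma<\frac{2T}{\chi_{k(n-1),\alpha/2}},
\]
and I would then push this through the map $\sigma\mapsto\Theta(\sigma)=\sigma^{k(1-q)}$. For $q>0$ with $q\neq1$ this map is a strictly monotone bijection of $(0,\infty)$ onto itself, so the displayed event is equivalent to the statement that $\Theta(\sigma)$ lies between $\bigl(2T/\chi_{k(n-1),1-\alpha/2}\bigr)^{k(1-q)}$ and $\bigl(2T/\chi_{k(n-1),\alpha/2}\bigr)^{k(1-q)}$; collecting the common factor $\pi=(2T)^{k(1-q)}$ turns these into $\pi/\chi_{k(n-1),1-\alpha/2}^{k(1-q)}$ and $\pi/\chi_{k(n-1),\alpha/2}^{k(1-q)}$. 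Because every step is an equivalence of events, the probability content is preserved, yielding a $1-\alpha$ confidence interval with exactly these two endpoints.

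The only place demanding care — as opposed to any real difficulty — is the sign of the exponent $k(1-q)$: when $q<1$ the power map is increasing and the endpoints come out in the order written in the statement, whereas when $q>1$ it is decreasing and the two expressions swap their roles as left and right endpoints. I would therefore record the conclusion separately for $q<1$ and $q>1$ (equivalently, state the interval with its endpoints arranged in increasing order), the algebra of factoring out $(2T)^{k(1-q)}$ being identical in both cases.
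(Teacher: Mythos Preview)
Your argument is correct and follows essentially the same route as the paper: start from the pivot $2T/\sigma\sim\chi^{2}_{2k(n-1)}$, bracket it by equal-tailed quantiles, and invert through the monotone power map $\sigma\mapsto\sigma^{k(1-q)}$. Your explicit treatment of the two cases $q<1$ and $q>1$ (endpoint ordering) is in fact more careful than the paper's own proof, which glosses over this sign issue.
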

\begin{proof}
    Let $\pi_i={\pi}/c_i, \ \ i=1,2$, then for $0<\alpha<1$, consider
    \begin{equation*}
    \begin{split}
        P\left(\pi_2<\sigma^{k(1-q)}<\pi_1\right)&\ge 1-\alpha \\
     \implies   P\left(c_1<{\left(\frac{2T}{\sigma} \right)}^{k(1-q)}<c_2\right)&\ge1-{\alpha} 
    \end{split}
    \end{equation*}
    As $2T/\sigma$ takes non-negative values only so using the monotonic behaviour of ${\left(\frac{2T}{\sigma}\right)}^{1-q}$, we have
    \begin{equation*}
     P\left({c_1}^{{\frac{1}{k(1-q)}}}<\frac{2T}{\sigma}<{{c_2}^{\frac{1}{k(1-q)}}}\right)\ge1-{\alpha}    
    \end{equation*}
    From the distribution of $2T/\sigma$, we get ${c_1}^{\frac{1}{k(1-q)}}={{\chi}_{k(n-1),\frac{\alpha}{2}}}$ and ${c_2}^{\frac{1}{k(1-q)}}={{\chi}_{k(n-1),1-\frac{\alpha}{2}}}$. This completes the proof.
\end{proof}
\noindent Now, we show the inadmissibility of BAEE by showing the existence of an improved estimate. For this, the Stein-type estimate is derived in next section.

\section{A Stein-type Improved Equivariant Estimate}\label{Stein-type improved estimate section}
 In order to improve the equivariant estimate $\delta_{c_0}$, we work on a larger class of estimators. Let $H_{r}=\{h_{r}: h_{r}(x)=rx,r>0\}$, be the subgroup of scaled function from group $G_{r,\Bar{s}}$. Due to this transformation, we have
 \begin{equation}\label{stein-type-invariant-transformations}
  (\Bar{\mu}, \sigma)\rightarrow (r\Bar{\mu},r\sigma), \text{\ \ \ \ } (X^{(1)},T)\rightarrow (rX^{(1)},rT). 
\end{equation}
It is observed that the error function in equation(\ref{lossfunction}) remains invariant under the scale group $H_r$, if ${\delta} \rightarrow {\delta}/r^{k(q-1)}$. Thus, the general form of scale equivariant estimate is,
\begin{equation} \label{formofOnlyLocationInvariantEstimate}
    {\delta}_{\psi}(\overline{W},T)=\frac{\psi(\overline{W})}{{T^{k(q-1)}}},
\end{equation}
where $W_i=X_{i}^{(1)}/T$, $\overline{W}=(W_1,W_2,...,W_k)$ and $\psi$ is a measurable function that takes real values. 

\noindent Assuming that all $u_{i}'$s are known, estimator $S=\sum_{i=1}^{k}\sum_{j=1}^{n}(X_{i}^{j}-{\mu}_i)$ can be used in place of estimator $T$ and it is evident that $S$ follows gamma distribution with shape and rate parameters $kn$ and $\frac{1}{\sigma}$. The non-randomised estimator of $\Theta(\sigma)$ using S, derived by following the same steps of Theorem(\ref{BAEE}), is given by
\begin{equation}
    {\delta}_{c_1}(X^{(1)},S)=\frac{c_1}{ S^{k(q-1)}},
\end{equation}
with $c_{1}=\frac{\Gamma(kn+k(1-q))}{\Gamma(kn+2k(1-q))}$.

\begin{lemma}\label{lemma of c1 and c0 inequalities}
    If $k$ is a positive integers then $\frac{\Gamma(x-k)}{\Gamma(x)}$ is a decreasing function for $x>k$.
\end{lemma}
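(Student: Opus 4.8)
The plan is to reduce the Gamma ratio to a finite product and then argue monotonicity termwise. For $x>k$, repeated application of the functional equation $\Gamma(z+1)=z\,\Gamma(z)$ gives
\[
\Gamma(x)=(x-1)(x-2)\cdots(x-k)\,\Gamma(x-k),
\]
where every argument of the Gamma function appearing here is positive (since $x-k>0$), so the identity is legitimate and $\Gamma(x-k)>0$. Consequently
\[
\frac{\Gamma(x-k)}{\Gamma(x)}=\frac{1}{\prod_{j=1}^{k}(x-j)}.
\]

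Next I would observe that for $x>k$ each factor $x-j$ with $1\le j\le k$ satisfies $x-j>x-k>0$, hence is strictly positive, and is obviously strictly increasing in $x$ with slope $1$. A finite product of positive, strictly increasing functions on an interval is itself positive and strictly increasing there; this is immediate from the product rule, since $\frac{d}{dx}\prod_{j=1}^k (x-j)=\big(\prod_{j=1}^k(x-j)\big)\sum_{j=1}^k\frac{1}{x-j}>0$ on $(k,\infty)$. Therefore $g(x):=\prod_{j=1}^{k}(x-j)$ is strictly increasing and strictly positive on $(k,\infty)$, and so its reciprocal $1/g(x)=\Gamma(x-k)/\Gamma(x)$ is strictly decreasing on $(k,\infty)$, which is the claim.

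An alternative route, worth recording, is via the digamma function: $\frac{d}{dx}\ln\frac{\Gamma(x-k)}{\Gamma(x)}=\psi(x-k)-\psi(x)$, and since $\psi'(z)=\sum_{n\ge 0}(z+n)^{-2}>0$ for $z>0$, $\psi$ is strictly increasing on $(0,\infty)$; as $x-k<x$ this derivative is negative, giving the same conclusion. I expect no real obstacle here — the only point requiring a moment's care is that the derivation is valid precisely on $x>k$, so that $\Gamma(x-k)$ is evaluated away from the poles of $\Gamma$ at the non-positive integers; this is exactly the hypothesis, so the statement and proof match cleanly. I would present the finite-product argument as the main proof since it is the most elementary and self-contained.
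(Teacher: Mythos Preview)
Your proof is correct. Your main argument --- reducing $\Gamma(x-k)/\Gamma(x)$ to the reciprocal of the finite product $\prod_{j=1}^{k}(x-j)$ via the functional equation and then arguing termwise --- is a genuinely different route from the paper's. The paper proceeds exactly along your ``alternative route'': it differentiates the ratio directly to obtain $\frac{\Gamma(x-k)}{\Gamma(x)}\bigl(\psi(x-k)-\psi(x)\bigr)$ and invokes the monotonicity of the digamma function on $(0,\infty)$. Your finite-product argument is more elementary and self-contained, requiring no facts about special functions beyond the recursion $\Gamma(z+1)=z\Gamma(z)$; it also makes essential use of the hypothesis that $k$ is a positive integer. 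The digamma argument, by contrast, goes through verbatim for any real $k>0$ (not just integers), so it is the more general of the two --- though the paper only states and uses the lemma for integer $k$, so your version loses nothing in context.
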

\begin{proof}
    For $x>k$, we differentiate the function $\frac{\Gamma(x-k)}{\Gamma(x)}$, we get
    \begin{equation*}
        \begin{split}
                {\left(\frac{\Gamma(x-k)} {\Gamma(x)}\right)}^{'}&=\frac{\Gamma(x)\Gamma(x-k)\Psi(x-k)-\Gamma(x-k)\Gamma(x)\Psi(x)}{{(\Gamma(x))}^2} \\
                    &=\frac{\Gamma(x-k)}{\Gamma(x)}\left( \Psi(x-k)-\Psi(x) \right) <0 
        \end{split}
    \end{equation*}
    Above last inequality is from that digamma function $\Psi$ is a increasing function for positive real numbers. This proves that $\frac{\Gamma(x-k)}{\Gamma(x)}$ is a decreasing function for $x>k$. 
\end{proof}
\begin{Note}
        Taking $a=kn+k(1-q)$ and $b=kn+2k(1-q)$, it is clear that for $0<q<1$, we have $a<b$ and from lemma(\ref{lemma of c1 and c0 inequalities}),
    \begin{equation*}
        \begin{split}
          \frac{\Gamma(b-k)}{\Gamma(b)}&<\frac{\Gamma(a-k)}{\Gamma(a)}  \\
          \frac{\Gamma(kn+2k(1-q)-k)}{\Gamma(kn+2k(1-q))}&<\frac{\Gamma(kn+k(1-q)-k)}{\Gamma(kn+k(1-q))} \\
          \frac{\Gamma(kn+k(1-q))}{\Gamma(kn+2k(1-q))}&<\frac{\Gamma(k(n-1)+k(1-q))}{\Gamma(k(n-1)+2k(1-q))} \\
          c_1<c_0.
        \end{split}
    \end{equation*}
\end{Note}
\begin{Note}
    Taking $a=kn+k(1-q)$ and $b=kn+2k(1-q)$, it is clear that for $1<q<\frac{(n+1)}{2}$, we have $a>b$ and from lemma(\ref{lemma of c1 and c0 inequalities}),
    \begin{equation*}
        \begin{split}
          \frac{\Gamma(b-k)}{\Gamma(b)}&>\frac{\Gamma(a-k)}{\Gamma(a)}  \\
          \frac{\Gamma(kn+2k(1-q)-k)}{\Gamma(kn+2k(1-q))}&>\frac{\Gamma(kn+k(1-q)-k)}{\Gamma(kn+k(1-q))} \\
          \frac{\Gamma(kn+k(1-q))}{\Gamma(kn+2k(1-q))}&>\frac{\Gamma(k(n-1)+k(1-q))}{\Gamma(k(n-1)+2k(1-q))} \\
          c_1>c_0.
        \end{split}
    \end{equation*}
\end{Note}

\noindent The joint distribution of $T$ and $\overline{W}$ is expressed as
\begin{equation}
    f(t,\overline{w};\overline{\mu})=\frac{n^k t^{nk-1}}{{\sigma}^{nk}\Gamma({k(n-1)})}e^{\frac{n}{\sigma}\sum_{i=1}^{k}\mu_i}e^{-{\frac{t}{\sigma}\left(n\sum_{i=1}^{k}w_{i}+1 \right)}},  \text{ \ \ ${u_i/t}\le w_i <\infty$, $i=1,2,...,k$ and $0<t<\infty,$}
\end{equation}
and let $\eta=\max_{j=1}^{k}\left(u_j/w_j \right),$ then the marginal density of $\overline{W}$ is given by
\begin{equation}
    f_{\overline{W}}(\overline{w};\overline{\mu})=\frac{n^k \Gamma{(kn,\eta)}}{\Gamma({k(n-1)})}{\left(n\sum_{i=1}^{k}w_{i}+1 \right)}^{-kn}e^{\frac{n}{\sigma}\sum_{i=1}^{k}\mu_i}, \text{ \ \ \ $0\le w_i<\infty$, $i=1,2,...,k.$ }
\end{equation}
\noindent Now we show the existence of the Stein-type{\cite{stein1964inadmissibility_Estimator}} estimator in the next theorem.
\begin{theorem}\label{stein-type estimate theorem}
    Let $c_1$ be the unique solution obtained from equation 
    \begin{equation} \label{BAEEusingSderivativequation1}
        E\biggl[L^{'}\biggl(\frac{c_1}{S^{k(q-1)}} \biggr)\frac{1}{S^{k(q-1)}} \biggr]=0,
    \end{equation}
    where $S=\sum_{i=1}^{k}\sum_{j=1}^{n}(X_{i}^{j}-{\mu}_i)$. Then, for the scale equivariant error function $L(t)$, $\delta_{{\psi}^{*}}$ has nowhere risk greater than $\delta_{c_0},$ provided
    \begin{equation}\label{c1 is less than c0 in theorem1}
        E\biggl[L^{'}\biggl(\frac{c_1}{T^{k(q-1)}} \biggr)\frac{1}{T^{k(q-1)}} \biggr]<0, \text{ \ \ if $0<q<1,$}
    \end{equation}
    \begin{equation}\label{c1 is less than c0 in theorem1.2}
      \text{\ \ and \ \ }  E\biggl[L^{'}\biggl(\frac{c_1}{T^{k(q-1)}} \biggr)\frac{1}{T^{k(q-1)}} \biggr]>0, \text{ \ \ if $1<q<\frac{(n+1)}{2}$, \ \ \ \ \ \ \ \ }
    \end{equation}
    where 
    \begin{equation}
        \psi^{*}(\overline{w})=\begin{cases}
    \min\Bigl\{ {c_{0}},{c_1}{{(1+n\sum_{i=1}^{k}w_i)}^{k(1-q)}} \Bigr\} ,& \text{if \ \ $0<q<1$ and \ \ } w_{i}>0, \text{\ \ }i=1,2,...,k\\
    \max\Bigl\{ {c_{0}},{c_1}{{(1+n\sum_{i=1}^{k}w_i)}^{k(1-q)}} \Bigr\} ,& \text{if \ \ $1<q<(n+1)/2$ and \ \ } w_{i}>0, \text{\ \ }i=1,2,...,k\\
    
    c_0 ,& \text{otherwise}.
\end{cases}
    \end{equation}
    
\end{theorem}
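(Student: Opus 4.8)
The plan is to run Stein's orbit-by-orbit conditional-risk argument for a \emph{general} bowl-shaped error function $L$, using only that $L$ is differentiable with $L'$ changing sign exactly once, from negative to positive, at the minimizing point $t=1$; the quadratic loss $(t-1)^2$ is then merely the special case in which the resulting balance equations admit a closed-form solution. First I would reduce to $\sigma=1$ by scale equivariance, so that for an estimator $\delta_\psi=\psi(\overline W)/T^{k(q-1)}$ the risk becomes $R=E_{\overline\mu,1}[L(\psi(\overline W)\,T^{k(1-q)})]$. Conditioning on $\overline W=\overline w$ (with $w_i>0$), I would study the conditional risk $\mathcal R(\overline w,a)=E[L(a\,T^{k(1-q)})\mid\overline W=\overline w]$ as a function of the single value $a=\psi(\overline w)$. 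Because $L'$ changes sign once at $t=1$, the map $\mathcal R(\overline w,\cdot)$ has a unique minimizer $a^{\ast}(\overline w;\overline\mu)$, with $g(\overline w,a;\overline\mu):=E[L'(a\,T^{k(1-q)})\,T^{k(1-q)}\mid\overline W=\overline w]$ negative for $a<a^{\ast}$ and positive for $a>a^{\ast}$; finiteness of these moments is exactly what forces $q<(n+1)/2$. Showing $\delta_{\psi^{\ast}}$ nowhere worse than $\delta_{c_0}$ then reduces to the pointwise inequality $\mathcal R(\overline w,\psi^{\ast}(\overline w))\le\mathcal R(\overline w,c_0)$ for every $\overline w$ and every $\overline\mu$.

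Next I would locate the conditional minimizer. From the joint law displayed above, the conditional distribution of $T$ given $\overline W=\overline w$ is a Gamma$(kn,\,1+n\sum_i w_i)$ law left-truncated at $\eta=\max_i(\mu_i/w_i)$. At $\overline\mu=\overline 0$ there is no truncation, and the identity $S=T\,(1+n\sum_i W_i)$ together with the Basu/Lukacs independence $S\perp\overline W$ (where $S\sim$ Gamma$(kn,1)$) turns the defining equation (\ref{BAEEusingSderivativequation1}) of $c_1$ into the statement that $a^{\ast}(\overline w;\overline 0)=\psi_1(\overline w)$, where $\psi_1(\overline w):=c_1(1+n\sum_i w_i)^{k(1-q)}$ is precisely the second argument of the $\min$/$\max$ in $\psi^{\ast}$. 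The heart of the proof is then a monotonicity claim: as $\eta$ increases the truncated family is ordered by monotone likelihood ratio, so $T$, and hence $T^{k(1-q)}$ for $0<q<1$, increases stochastically; since $v\mapsto L'(av)\,v$ has a single sign change at $v=1/a$, the variation-diminishing property of MLR families forces the root $a^{\ast}(\overline w;\overline\mu)$ to be nonincreasing in $\eta$. Consequently $a^{\ast}(\overline w;\overline\mu)\le\psi_1(\overline w)$ for all $\overline\mu$ when $0<q<1$, while the reverse inequality $a^{\ast}\ge\psi_1$ holds when $1<q<(n+1)/2$, where $T^{k(1-q)}$ is decreasing in $T$ and the order reverses.

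With the minimizer thus pinned between $0$ and $\psi_1(\overline w)$ (resp.\ above it), the assembly is routine unimodality bookkeeping. For $0<q<1$: if $\psi_1(\overline w)\ge c_0$ then $\psi^{\ast}=c_0$ and the conditional risks coincide; if $\psi_1(\overline w)<c_0$ then $a^{\ast}\le\psi_1<c_0$ places both $\psi_1$ and $c_0$ on the increasing branch of $\mathcal R(\overline w,\cdot)$, so $\mathcal R(\overline w,\psi_1)\le\mathcal R(\overline w,c_0)$, with strict inequality. The case $1<q<(n+1)/2$ is symmetric, with $\max$ replacing $\min$ and the decreasing branch. Integrating over $\overline W$ gives $R(\delta_{\psi^{\ast}})\le R(\delta_{c_0})$ for every $\overline\mu,\sigma$. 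The conditions (\ref{c1 is less than c0 in theorem1}) and (\ref{c1 is less than c0 in theorem1.2})---which, in view of the Notes, amount to $c_1<c_0$ and $c_1>c_0$ respectively---enter exactly here: they guarantee that $\psi_1(\overline w)$ sits strictly on the improving side of $c_0$ for $\overline w$ in a neighbourhood of the origin (a set of positive probability, since $(1+n\sum_i w_i)^{k(1-q)}\to 1$ there), so the inequality is strict on a set of positive measure and $\delta_{c_0}$ is genuinely inadmissible.

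The step I expect to be the main obstacle is the monotonicity of the root $a^{\ast}(\overline w;\overline\mu)$ in the truncation point $\eta$ for a general bowl-shaped $L$: unlike the quadratic case, where $a^{\ast}$ is an explicit ratio of conditional moments and monotonicity is a direct calculation, here one must argue it abstractly from the single sign change of $v\mapsto L'(av)\,v$ and the monotone-likelihood-ratio ordering of the left-truncated Gamma family, while checking that all the expectations defining $g$ remain finite throughout the admissible range $0<q<(n+1)/2$, $q\neq 1$.
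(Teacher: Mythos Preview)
Your proposal is correct and follows essentially the same Stein-type conditional-risk argument as the paper: reduce to $\sigma=1$, condition on $\overline W$, identify the conditional minimizer at $\overline\mu=\overline 0$ as $c_1(1+n\sum_i w_i)^{k(1-q)}$, use the MLR property of the (truncated) conditional law of $T$ to order the minimizers $a^\ast(\overline w;\overline\mu)$ relative to $a^\ast(\overline w;\overline 0)$, and then invoke the bowl shape of the conditional risk to sandwich $\psi^\ast$ between $c_0$ and the true minimizer. The only cosmetic difference is that you obtain the formula for $a^\ast(\overline w;\overline 0)$ via the identity $S=T(1+n\sum_i W_i)$ and the resulting independence of $S$ and $\overline W$, whereas the paper reaches the same formula by a direct change of variables in the integral; and your MLR/variation-diminishing step is exactly what the paper encapsulates by citing Lemma~2.2 of \cite{tripathi2018estimating} (and its sign-reversed variant for $q>1$).
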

\begin{proof}
    As the risk value of ${\delta}_{\psi},$ is dependent on $\overline{\mu}$ and $\sigma$ via the ratio $u_i/{\sigma}$ for all $i=1,2,...,k$, we can take $\sigma=1$ from equation(\ref{stein-type-invariant-transformations}). Consider the risk function of ${\delta}_{\psi},$
\begin{equation}
\begin{split}
    R(\overline{\mu},{\delta}_{\psi})
    &=E_{\overline{\mu}}\biggl[L\biggl(\frac{\psi(\overline{W})}{{T^{k(q-1)}}}\biggr)\biggr], \\
    &=E^{\overline{W}}E_{\overline{\mu}}\biggl[L\biggl({\frac{\psi(\overline{w})}{T^{k(q-1)}}}\biggr)\bigg|\overline{W}=\overline{w} \biggr].
\end{split}
\end{equation}
Let us denote
\begin{equation}
    R^{\overline{W}}(\overline{\mu},c)=E_{\overline{\mu}}\biggl[ L\biggl(\frac{c}{T^{k(q-1)}}\biggr)\bigg|\overline{W}=\overline{w} \biggr]
\end{equation}
Suppose $\overline{W}=(W_1,W_2,...,W_k)$ is such that $W_i>0$ for all $i'$s and there exists at least one shape parameter $\mu_j>0$, for some $j$. The conditional distribution of $T$ given random vector $\overline{W}$ for $\sigma=1$, is 
\begin{equation}
 f_{T|\overline{W}}(t|\overline{w};\overline{\mu})={\left(1+n\sum_{i=1}^{k}w_{i} \right)}^{kn}\frac{t^{kn-1}}{\Gamma(kn,\eta)}e^{-t(1+n\sum_{i=1}^{k}w_i)}, \text{ \ \ \ where $t>\eta$}.
\end{equation}
Additionally, $R^{\overline{W}}(\overline{\mu},c)$ is a bowl-shaped function of $c$. The value of $c$ for which $R^{\overline{W}}(\overline{\mu},c)$ takes minimum value is $c_{\overline{\mu}}(\overline{w})$ and computed from
\begin{equation} \label{cuw Vali Main equation}
    E_{\overline{\mu}}\biggl[L^{'}\biggl(\frac{c_{\overline{\mu}}(\overline{w})}{T^{k(q-1)}}\biggr) {\frac{1}{T^{k(q-1)}}}\bigg{|}\overline{W}=\overline{w} \biggr]=0.
\end{equation}
Now we prove that $c_{\overline{\mu}}(\overline{w})\le c_{\overline{0}}(\overline{w})$ for $0<q<1$. Suppose that $c_{\overline{\mu}}(\overline{w})>c_{\overline{0}}(\overline{w})$. Since $c_{\overline{0}}(\overline{w})$ is the solution of equation(\ref{cuw Vali Main equation}) for $\overline{\mu}=\overline{0}$ so we can write
\begin{equation}
\begin{split}
    0&=E_{\overline{0}}\biggl[L^{'}\biggl(\frac{c_{\overline{0}}(\overline{w})}{T^{k(q-1)}}\biggr){\frac{1}{T^{k(q-1)}}}\bigg{|}\overline{W}=\overline{w} \biggr] \\
    &<E_{\overline{0}}\biggl[L^{'}\biggl(\frac{c_{\overline{\mu}}(\overline{w})}{T^{k(q-1)}}\biggr){\frac{1}{T^{k(q-1)}}}\bigg{|}\overline{W}=\overline{w} \biggr] \text{ \ \ \ (from the assumption)} \\
    &\le E_{\overline{\mu}}\biggl[L^{'}\biggl(\frac{c_{\overline{\mu}}(\overline{w})}{T^{k(q-1)}}\biggr){\frac{1}{T^{k(q-1)}}}\bigg{|}\overline{W}=\overline{w} \biggr]  \\
    &=0 \text{ \ \ \ (a contradiction)}
\end{split}
\end{equation}
Last inequality follows from the fact that $f_{T|\overline{W}}(t|\overline{w};\overline{\mu})/f_{T|\overline{W}}(t|\overline{w};\overline{0})$, is increasing function in $t$ and utilizing the Lemma(2.2) of {\cite{tripathi2018estimating}}. From above, we can say that 
\begin{equation}\label{cuw is less than c0w}
c_{\overline{\mu}}(\overline{w})\le c_{\overline{0}}(\overline{w}).
\end{equation}

\noindent Now we prove that $c_{\overline{\mu}}(\overline{w})\ge c_{\overline{0}}(\overline{w})$ for $1<q<(n+1)/2$. For $q>1$, the loss function behaves differently. Suppose that $c_{\overline{\mu}}(\overline{w})<c_{\overline{0}}(\overline{w})$. we can write
\begin{equation}\label{showing c0(w)<cu(w) derivation}
\begin{split}
    0&=E_{\overline{0}}\biggl[L^{'}\biggl(\frac{c_{\overline{0}}(\overline{w})}{T^{k(q-1)}}\biggr){\frac{1}{T^{k(q-1)}}}\bigg{|}\overline{W}=\overline{w} \biggr] \\
    &>E_{\overline{0}}\biggl[L^{'}\biggl(\frac{c_{\overline{\mu}}(\overline{w})}{T^{k(q-1)}}\biggr){\frac{1}{T^{k(q-1)}}}\bigg{|}\overline{W}=\overline{w} \biggr] \text{ \ \ \ (from the assumption)} \\
    &\ge E_{\overline{\mu}}\biggl[L^{'}\biggl(\frac{c_{\overline{\mu}}(\overline{w})}{T^{k(q-1)}}\biggr){\frac{1}{T^{k(q-1)}}}\bigg{|}\overline{W}=\overline{w} \biggr]  \\
    &=0 \text{ \ \ \ (a contradiction)}
\end{split}
\end{equation}
In last inequality, we modify the Lemma(2.2) of {\cite{tripathi2018estimating}}, for the loss function of type 
\begin{equation}
L(t)={\left( \frac{1}{t}-1 \right)}^2.
\end{equation}
Now $L^{'}(t)$ is positive for $0<t<1$ and negative for $t>1$. So we can show that for a density function $f(x)$ on $(0,\infty)$, for an increasing non-negative function $g(x)$ on $(0,\infty)$ and for some $x_0\in (0,\infty)$, $h(x)$ changes its sign from positive to negative in $(0,\infty)$, such that $h(x)>0$, for $x<x_0$ and $h(x)<0$, for $x>x_0$ then we can write
\begin{equation}\label{showing the modified lemma}
    0=\int_{0}^{\infty}h(x)g(x)f(x)dx\le \int_{0}^{\infty}h(x)f(x)dx.
\end{equation}
Thus we can say that 
\begin{equation}\label{cuw is less than c0w1}
c_{\overline{\mu}}(\overline{w})\ge c_{\overline{0}}(\overline{w}).
\end{equation}
\noindent Next step is to write $c_{\overline{0}}(\overline{w})$ as a function of $\overline{w}$ for that consider equation(\ref{cuw Vali Main equation}) for $\overline{\mu}=\overline{0}$, we get
\begin{equation}
\begin{split}
    &E_{\overline{0}}\left[L^{'}\left( \frac{c_{\overline{0}}(\overline{w})}{T^{k(q-1)}}\right){\frac{1}{T^{k(q-1)}}}\bigg{|}\overline{W}=\overline{w} \right]=0, \\
    &\bigintsss_{0}^{\infty}L^{'}\left({c_{\overline{0}}(\overline{w})}{t^{k(1-q)}}\right){{t^{k(1-q)}}}{\left(1+n\sum_{i=1}^{k}w_{i} \right)}^{kn}\frac{t^{kn-1}}{\Gamma(kn)}e^{-t(1+n\sum_{i=1}^{k}w_i)}dt=0.
\end{split}
\end{equation}
Substituting $t\left(1+n\sum_{i=1}^{k}w_{i} \right)=z$, we get
\begin{equation}
    \bigintsss_{0}^{\infty}L^{'}\left( \frac{c_{\overline{0}}(\overline{w})z^{k(1-q)}}{{\left(1+n\sum_{i=1}^{k}w_i\right)}^{k(1-q)}}  \right){z^{k(1-q)}}\frac{z^{kn-1}}{\Gamma(kn)}e^{-z}dz=0.
\end{equation}
This gives 
\begin{equation}\label{c1 as a function of w}
c_1=\frac{c_{\overline{0}}(\overline{w})}{{\left(1+n\sum_{i=1}^{k}w_i\right)}^{k(1-q)}}. 
\end{equation}
From the given equation(\ref{c1 is less than c0 in theorem1}), for $0<q<1$, we can write 
\begin{equation}\label{c1 is less than c0}
c_1<c_0,
\end{equation}
also from equation(\ref{c1 is less than c0 in theorem1.2}), for $1<q<(n+1)/2$, we have 
\begin{equation}\label{c1 is greater than c0}
c_1>c_0.
\end{equation}
Define a function as 
\begin{equation}
    \psi^{*}(\overline{w})=\begin{cases}
    \min\Bigl\{ {c_{0}},{c_1}{{(1+n\sum_{i=1}^{k}w_i)}^{k(1-q)}} \Bigr\} ,& \text{if \ \ $0<q<1$ and \ \ } w_{i}>0, \text{\ \ }i=1,2,...,k\\
    \max\Bigl\{ {c_{0}},{c_1}{{(1+n\sum_{i=1}^{k}w_i)}^{k(1-q)}} \Bigr\} ,& \text{if \ \ $1<q<(n+1)/2$ and \ \ } w_{i}>0, \text{\ \ }i=1,2,...,k\\
    
    c_0 ,& \text{otherwise}.
\end{cases}
\end{equation}

For $0<q<1$, from equation(\ref{cuw is less than c0w}), (\ref{c1 as a function of w}) and (\ref{c1 is less than c0}), we have $c_{\overline{\mu}}(\overline{w})<\psi^{*}(\overline{w})<c_{0}$ holds on a set of non-zero probability for $\overline{w}$. Now $R^{\overline{W}}(\overline{\mu},c)$ is a bowl-shaped function of $c$, and when $c>c_{\overline{\mu}}(\overline{w})$, it becomes a increasing function, thus we can write 
\begin{equation} \label{r1(psi)<r1(c0)}
R^{\overline{W}}(\overline{\mu},\psi^{*}(\overline{w}))<R^{\overline{W}}(\overline{\mu},c_{0}). 
\end{equation}
Similar to above if $1<q<(n+1)/2$, from equation(\ref{cuw is less than c0w}), (\ref{c1 as a function of w}) and (\ref{c1 is greater than c0}), we have $c_{0}<\psi^{*}(\overline{w})<c_{\overline{\mu}}(\overline{w})$ holds on a set of non-zero probability for $\overline{w}$. Now $R^{\overline{W}}(\overline{\mu},c)$ is a bowl-shaped function of $c$, and when $c<c_{\overline{\mu}}(\overline{w})$, it becomes a decreasing function, thus we can write 
\begin{equation} \label{r1(psi)<r1(c0)1}
R^{\overline{W}}(\overline{\mu},\psi^{*}(\overline{w}))<R^{\overline{W}}(\overline{\mu},c_{0}). 
\end{equation}
Equation(\ref{r1(psi)<r1(c0)}) and (\ref{r1(psi)<r1(c0)1}), holds for all values of $\overline{\mu}$ and $w_{i}>0,$ $i=1,2,...,k,$ as ${(1+n\sum_{i=1}^{k}w_i)}^{k(1-q)}<{c_0}/{c_1}$, and ${(1+n\sum_{i=1}^{k}w_i)}^{k(1-q)}>{c_0}/{c_1}$ holds respectively depending upon the value of entropy index $q$, on a set of non-zero probability for $\overline{w}$. Hence, we get
\begin{equation}
    R(\overline{\mu},{\delta}_{\psi^{*}})<R(\overline{\mu},{\delta}_{c_0})
\end{equation}

\noindent This completes the proof of the theorem.
\end{proof}
\noindent The existence of a Stein-type estimate for some particular type of $w_i$'s uniformly dominates the BAEE, proves the inadmissibility of the BAEE. The improved estimator lacks smoothness. Next, we discuss the Brewster-Zidek technique which uses sequential approach to reduce the estimator risk.
\section{A Smooth Improved Equivariant Estimate } \label{Smooth improved estimate section4}
\noindent In this section, we derive an estimate of $1/{\sigma}^{k(q-1)}$, which is smooth and improved over $\delta_{c_0}$. The general form of the improved smooth estimator is written as
\begin{equation}\label{equation Brewster general form estimate}
    \delta_{\phi}(X^{(1)},T)=\begin{cases}
      {\frac{\phi(\overline{W})}{{T}^{k(q-1)}}} ,& \text{if } \overline{W} \in B_{r}, \\
    c_0 ,& \text{otherwise}.
    \end{cases}
\end{equation}
where $B_{\textbf{r}}=\times_{i=1}^{k}(0,r_i]$ and $\textbf{r}=(r_1,r_2,...,r_k)$ with $r_i>0,$ for $i=1,2,...,k$. To obtain improved equivariant estimate, we analyze conditional risk 
\begin{equation}
\begin{split}
    R_{1}(\phi,\overline{\mu})=&E_{\overline{\mu}}\biggl[L\biggl({\frac{\phi(\overline{W})}{T^{k(q-1)}}}\biggr)\bigg{|}\overline{W}\in B_{\textbf{r}} \biggr]\\
=& \bigintsss_{0}^{\infty}L\biggl(\frac{\phi(\overline{w})}{t^{k(q-1)}}\biggr)f_{T|B_{\textbf{r}}}(t|\overline{w};\overline{\mu})dt.
    \end{split}
\end{equation}
First we compute the conditional density $f_{T|B_{\textbf{r}}}$, for $\sigma=1$, as the problem of estimation is scale equivariant. Now
\begin{equation}
\begin{split}
    f_{T|B_{\textbf{r}}}(t|\overline{w};\overline{\mu})=&\frac{f_{T,B_{\textbf{r}}}(t,\overline{w};\overline{\mu})}{f_{B_{{\textbf{r}}}}(\overline{w};\overline{\mu})}\\
    =& \frac{\bigintssss_{0}^{r_k} \bigintssss_{0}^{r_{k-1}} \cdots \bigintssss_{0}^{r_1}f_{T,\overline{W}}(t,\overline{w};\overline{\mu})dw_1...dw_{k-1}dw_{k}}{\bigintssss_{0}^{\infty}f_{T,B_{{\textbf{r}}}}(t,\overline{w};\overline{\mu})dt}.
\end{split}
\end{equation}

\noindent The range of random variables $T$, and $\overline{W}$, depends on the nature of the location parameters $u_i$, for $i=1,2,...,k$. Suppose we take all $u_i$ as negative number, then each $W_i$, is from $u_i/T$ to infinity, and T ranges from 0 to infinity. We assume that each $u_i$ is positive then we can write
\begin{equation}
\begin{split}
    f_{T,B_{\textbf{r}}}(t,\overline{w};\overline{\mu})&\propto t^{nk-1}e^{-t}\prod_{i=1}^{k}\int_{0}^{r_i}e^{-{{t}}nw_i}dw_i, \text{ \ \ \ $0<t<\infty$}\\
    &= t^{nk-1}e^{-t}\prod_{i=1}^{k}\int_{\frac{u_i}{t}}^{r_i}e^{-{{t}}nw_i}dw_i \\
    &= t^{k(n-1)-1}e^{-t} \prod_{i=1}^{k} \left( e^{-nu_i}-e^{-tnr_i} \right),
\end{split}
\end{equation}
and marginal density of $\overline{W}$, is given by
\begin{equation}
    f_{B_{{\textbf{r}}}}(\overline{w};\overline{\mu})=\int_{\eta}^{\infty}f_{T,B_{\textbf{r}}}(t,\overline{w};\overline{\mu})dt, \text{ \ \ \ $\overline{w}\in B_{\textbf{r}}.$}
\end{equation}
For a given finite positive integer $k=1, 2, 3, ...$ so on, we can compute above expression. Next lemma is useful in proving the supremacy of $\delta_{\phi}(X^{(1)},T)$ over $\delta_{c_0}(X^{(1)},T)$.

\begin{lemma}
\end{lemma}
    \begin{enumerate}
        \item For each $r_i>0,$ where $i=1,2,...,k,$ $R_{1}(d,\overline{\mu})$ is a strictly U-shaped function in $d$.
        \item Suppose $d({\textbf{r}},\overline{\mu})$ and $d({\textbf{r}},\overline{0})$ are minima of $R_{1}(d,\overline{\mu})$ and $R_{1}(d,\overline{0})$ separately, then $d({\textbf{r}},\overline{\mu})\le d({\textbf{r}},\overline{0})$ holds for $0<q<1$ and $d({\textbf{r}},\overline{\mu})\ge d({\textbf{r}},\overline{0})$ holds for $1<q<(n+1)/2$.
        \item For $0<q<1$, $d({\textbf{r}},\overline{0})$ is non-decreasing function for each $r_i$, where $i=1,2,...,k$.
        \item For $1<q<(n+1)/2$, $d({\textbf{r}},\overline{0})$ is non-increasing function for each $r_i$, where $i=1,2,...,k$.
    \end{enumerate}
\begin{proof}
\begin{enumerate}
        \item \label{lemma4.1(1)} First we prove that $f_{T|B_{{\textbf{r}}}}(t|\overline{w}; \overline{\mu})$ has the monotone likelihood ratio property. Here we transform the random variable $T$ with transformation $Y=\ln(T)$, based on this, the corresponding conditional density function is denoted by $g_{Y|B_{{\textbf{r}}}}(y|\overline{w};\overline{\mu})$ and given by
        \begin{equation}
            g_{Y|B_{{\textbf{r}}}}(y|\overline{w};\overline{\mu})\propto {(e^{y})}^{k(n-1)-1}e^{-e^{y}}e^{y}\prod_{i=1}^{k}\left(1-e^{-nr_i e^{y}} \right), \text{ \ \ \ where $y\in (-\infty,\infty).$}
        \end{equation}
For $d_1<d_2$, consider the ratio
\begin{equation}
\begin{split}
    \frac{g_{Y|B_{{\textbf{r}}}}(y-d_2|\overline{w};\overline{\mu})}{g_{Y|B_{{\textbf{r}}}}(y-d_1|\overline{w};\overline{\mu})} &\propto \frac{{(e^{y-d_2})}^{k(n-1)-1}e^{-e^{y-d_2}}e^{y-d_2}\prod_{i=1}^{k}\left(1-e^{-nr_i e^{y-d_2}} \right)}{{(e^{y-d_1})}^{k(n-1)-1}e^{-e^{y-d_1}}e^{y-d_1}\prod_{i=1}^{k}\left(1-e^{-nr_i e^{y-d_1}} \right)}\\
    &=e^{(k(n-1))(d_1-d_2)}e^{e^{y-d_1}-e^{y-d_2}}\prod_{i=1}^{k}\left(\frac{1-e^{-nr_ie^{y-d_2}}}{1-e^{-nr_ie^{y-d_1}}} \right).
\end{split}
\end{equation}
Substituting $e^y=z$ in above expression and taking $a_0=1,a=nr_i e^{-d_2}, b=nr_i e^{-d_1}$ for any $i=1,2,...,k$, we get $0<a<b$ and then from Lemma(6.1) of {\cite{patra2020estimating}}, we can say that 
\begin{equation*}
    \left(\frac{1-e^{-nr_ie^{y-d_2}}}{1-e^{-nr_ie^{y-d_1}}} \right)
\end{equation*}
is a increasing function of $y$ for all $i=1,2,...,k$. Also it is easy to see that 
\begin{equation*}
    e^{e^{y-d_1}-e^{y-d_2}}=e^{e^y\left(\frac{1}{e^{d_1}}-\frac{1}{e^{d_2}} \right)}
\end{equation*}
is a increasing function in $y$ and thus the considered ratio is increasing in $y$. Also since we have transformed the random variable $T$, we conclude that $f_{T|B_{\textbf{r}}}(t|\overline{w};\overline{\mu})$ has MLR property. From Lemma(2.1) of \cite{brewster1974improving}, it is easy to see that $R_{1}(d,\overline{\mu})$ is U-shaped function in $d$ for every $r_i$.

\item For $0<q<1$, suppose that $d({\textbf{r}},\overline{\mu})> d({\textbf{r}},\overline{0})$. As we know that $d({\textbf{r}},\overline{0})$ is a minima of $R_{1}(d,\overline{0})$ so we are considering
\begin{equation}
\begin{split}
    0&=E_{\overline{0}}\biggl[L^{'}\biggl(\frac{d({\textbf{r}},\overline{0})}{T^{k(q-1)}}\biggr)\frac{1}{T^{k(q-1)}}\bigg{|} \overline{W}\in B_{\textbf{r}}  \biggr]\\
    &<E_{\overline{0}}\biggl[L^{'}\biggl(\frac{d({\textbf{r}},\overline{\mu})}{T^{k(q-1)}}\biggl)\frac{1}{T^{k(q-1)}}\bigg{|} \overline{W}\in B_{\textbf{r}}  \biggl]\\ 
    &\le E_{\overline{\mu}}\biggl[L^{'}\biggl(\frac{d({\textbf{r}},\overline{\mu})}{T^{k(q-1)}}\biggr)\frac{1}{T^{k(q-1)}}\bigg{|} \overline{W}\in B_{\textbf{r}}  \biggr]\\
    &=0  \text{ \ \ \ (a contradiction)}
\end{split}
\end{equation}
Last inequality follows from the fact that the ratio $f_{T|B_{{\textbf{r}}}}(t|\overline{w};\overline{\mu})/f_{T|B_{{\textbf{r}}}}(t|\overline{w};\overline{0})$ is a non-decreasing function in $t$, thus using the Lemma(2.2) of {\cite{tripathi2018estimating}}, we get the desired result.
Also for $1<q<(n+1)/2$, we can follow the same steps as in equation(\ref{showing c0(w)<cu(w) derivation}), utilizing the modified form of Lemma(2.2) given in equation(\ref{showing the modified lemma}), we can get $d({\textbf{r}},\overline{\mu})\ge d({\textbf{r}},\overline{0})$.
\item Now we show that for $0<q<1$, $d({\textbf{r}},\overline{0})$ in $r_i$, for any $i=1,2,...,k$, is non-decreasing. Let $r_i=m$ then ${\textbf{r}}_m=(r_1,r_2,...,m,...,r_k)$. For $m_1<m_2$, consider the function
\begin{equation} \label{increasing behaviour of fTB(t|w)}
    \begin{split}
        \frac{f_{T|B_{{\textbf{r}}_{m_2}}}(t|\overline{w};\overline{0})}{f_{T|B_{{\textbf{r}}_{m_1}}}(t|\overline{w};\overline{0})}\propto&\frac{1-e^{-n{m_2}{t}}}{1-e^{-n{m_1}{t}}}.
    \end{split}
\end{equation}
Substituting $nt=z$, from Lemma(6.2) of {\cite{patra2020estimating}} , we can say that the above ratio of densities is non-increasing in $t$ $i.e.$, $f_{T|B_{{\textbf{r}}_{m_2}}}(t|\overline{w};\overline{0})/f_{T|B_{{\textbf{r}}_{m_1}}}(t|\overline{w};\overline{0})$ is non-increasing in $r_i$. Now we know that $d({\textbf{r}}_{m_1},\overline{0})$ is a minima of $R_{1}(d({\textbf{r}}_{m_1},\overline{0}),\overline{0})$. Suppose $d({\textbf{r}}_{m_1},\overline{0})>d({\textbf{r}}_{m_2},\overline{0}),$ then
\begin{equation} \label{eq for constr in Lemma (1)}
    \begin{split} 0=&E_{\overline{0}}\biggl[L^{'}\biggl(\frac{d({\textbf{r}}_{m_1},\overline{0})}{T^{k(q-1)}}\biggr)\frac{1}{T^{k(q-1)}}\bigg{|}\overline{W}\in B_{{\textbf{r}}_{m_1}} \biggr]\\
    =&\bigintssss_{0}^{\infty}L^{'}\biggl( \frac{d({\textbf{r}}_{m_1},\overline{0})}{t^{k(q-1)}}\biggr)\frac{1}{t^{k(q-1)}}f_{T|B_{{\textbf{r}}_{m_1}}}(t|\overline{w};\overline{0})dt\\
    >& \bigintssss_{0}^{\infty}L^{'}\biggl( \frac{d({\textbf{r}}_{m_{2}},\overline{0})}{t^{k(q-1)}}\biggl)\frac{1}{t^{k(q-1)}}f_{T|B_{{\textbf{r}}_{m_1}}}(t|\overline{w};\overline{0})dt \text{ \ \ (from the assumption)}
    \end{split}
\end{equation}
Consider 
\begin{equation} \label{eq for constr in Lemma (2)}
    \begin{split}
0=&E_{\overline{0}}\biggr[L^{'}\biggr(\frac{d({\textbf{r}}_{m_2},\overline{0})}{T^{k(q-1)}}\biggl)\frac{1}{T^{k(q-1)}}\bigg{|}\overline{W}\in B_{{\textbf{r}}_{m_2}} \biggl] \\
&=\bigintssss_{0}^{\infty}L^{'}\biggl( \frac{d({\textbf{r}}_{m_2},\overline{0})}{t^{k(q-1)}}\biggr)\frac{1}{t^{k(q-1)}}f_{T|B_{{\textbf{r}}_{m_2}}}(t|\overline{w};\overline{0})dt\\
&=\bigintssss_{0}^{\infty}L^{'}\biggl( \frac{d({\textbf{r}}_{m_2},\overline{0})}{t^{k(q-1)}}\biggr)\frac{1}{t^{k(q-1)}}\frac{f_{T|B_{{\textbf{r}}_{m_2}}}(t|\overline{w};\overline{0})}{f_{T|B_{{\textbf{r}}_{m_1}}}(t|\overline{w};\overline{0})}f_{T|B_{{\textbf{r}}_{m_1}}}(t|\overline{w};\overline{0})dt\\
&\le \bigintssss_{0}^{\infty}L^{'}\biggl( \frac{d({\textbf{r}}_{m_2},\overline{0})}{t^{k(q-1)}}\biggr)\frac{1}{t^{k(q-1)}}f_{T|B_{{\textbf{r}}_{m_1}}}(t|\overline{w};\overline{0})dt.
    \end{split}
\end{equation}
Last step follows from equation(\ref{increasing behaviour of fTB(t|w)}) and the Lemma(1.2) of {\cite{kanta2020improved_for_lemma_of_ratio_of_densities}}. Combining equations(\ref{eq for constr in Lemma (1)}) and (\ref{eq for constr in Lemma (2)}), we get a contradiction. This prove that $d({\textbf{r}}_{m_1},\overline{0})\le d({\textbf{r}}_{m_2},\overline{0})$. That is, $d({\textbf{r}}_,\overline{0})$ is a non-decreasing function for each $r_i$, $i=1,2,...,k$.
\item Similar to Lemma(1.2) of \cite{kanta2020improved_for_lemma_of_ratio_of_densities}, for $1<q<(n+1)/2$, if a density function $f(x)$ defined on $(0,\infty)$, a non-negative non-increasing function $g(x)$ on $(0,\infty)$ and for some $x_0$ in $(0,\infty)$, $h(x)$ changes its sign from positive to negative such that $h(x)>0$, for $x<x_0$ and $h(x)<0$ for $x>x_0$, then we can write the inequality
\begin{equation}\label{modified 2}
        0=\int_{0}^{\infty}h(x)g(x)f(x)dx\le \int_{0}^{\infty}h(x)f(x)dx.
\end{equation}
It is easy to see that following the steps of equation(\ref{eq for constr in Lemma (1)}), (\ref{eq for constr in Lemma (2)}) and applying equation(\ref{modified 2}), we can show that for $1<q<(n+1)/2$, $d({\textbf{r}}_,\overline{0})$ is a non-increasing function for each $r_i$, $i=1,2,...,k$.

\end{enumerate}
\end{proof}

\begin{theorem} \label{dominance theorem1}
    For the quadratic error function $L(t)$, the risk value of the estimator $\delta_{\phi}(X^{(1)},T)$, is nowhere greater than the risk value of the estimator $\delta_{c_0}(X^{(1)},T)$, where
    \begin{equation}
        \phi({\overline{w}})=\begin{cases}
            d({\textbf{r}},\overline{0}), & \overline{w}\in B_{\textbf{r}}, \\
            c_0, & otherwise.
        \end{cases}
    \end{equation}
\end{theorem}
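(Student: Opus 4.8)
The plan is to pass to the conditional risk decomposition already set up before the lemma and exploit the four properties just established. Writing $R(\overline{\mu},\delta_\phi)$ as the average of the conditional risks over the events $\{\overline{W}\in B_{\textbf{r}}\}$ and $\{\overline{W}\notin B_{\textbf{r}}\}$, the two estimators $\delta_\phi$ and $\delta_{c_0}$ agree on the complement $\{\overline{W}\notin B_{\textbf{r}}\}$, so it suffices to show
\[
R_1(d(\textbf{r},\overline{0}),\overline{\mu}) \le R_1(c_0,\overline{\mu}) \qquad \text{for all } \overline{\mu}.
\]
By Lemma (1), $R_1(\cdot,\overline{\mu})$ is strictly U-shaped with unique minimizer $d(\textbf{r},\overline{\mu})$, so it is increasing to the right of that minimizer and decreasing to the left. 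Hence the inequality follows once I show that $d(\textbf{r},\overline{0})$ and $c_0$ lie on the same side of $d(\textbf{r},\overline{\mu})$, namely that $d(\textbf{r},\overline{\mu})$ is sandwiched between them. I would treat the two regimes $0<q<1$ and $1<q<(n+1)/2$ in parallel.

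For $0<q<1$: Lemma (2) gives $d(\textbf{r},\overline{\mu}) \le d(\textbf{r},\overline{0})$. For the other side, I want $d(\textbf{r},\overline{0}) \le c_0$. The key observation is that $c_0$ is exactly the limiting value of $d(\textbf{r},\overline{0})$ as all $r_i \to \infty$ (letting every $r_i\to\infty$ turns $B_{\textbf{r}}$ into the whole space $(0,\infty)^k$, and the conditional problem collapses to the unconditional one solved in Theorem \ref{BAEE}; one can also check directly that the defining equation for $d(\textbf{r},\overline{0})$ becomes equation (\ref{cuw Vali Main equation})-type integral whose solution is $c_0$). By Lemma (3), $d(\textbf{r},\overline{0})$ is non-decreasing in each $r_i$, so it is bounded above by this limit, giving $d(\textbf{r},\overline{0})\le c_0$. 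Combining, $d(\textbf{r},\overline{\mu}) \le d(\textbf{r},\overline{0}) \le c_0$; since $R_1(\cdot,\overline{\mu})$ is increasing on $[d(\textbf{r},\overline{\mu}),\infty)$ and both $d(\textbf{r},\overline{0})$ and $c_0$ lie there, $R_1(d(\textbf{r},\overline{0}),\overline{\mu}) \le R_1(c_0,\overline{\mu})$, with strict inequality on a set of $\overline{\mu}$ of positive measure whenever $d(\textbf{r},\overline{0})<c_0$.

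For $1<q<(n+1)/2$: the inequalities reverse consistently. Lemma (2) gives $d(\textbf{r},\overline{\mu}) \ge d(\textbf{r},\overline{0})$, and Lemma (4) gives that $d(\textbf{r},\overline{0})$ is non-increasing in each $r_i$, hence $\ge$ its $r_i\to\infty$ limit $c_0$; so $d(\textbf{r},\overline{\mu}) \ge d(\textbf{r},\overline{0}) \ge c_0$, and now $R_1(\cdot,\overline{\mu})$ is decreasing on $(-\infty,d(\textbf{r},\overline{\mu})]$, again yielding $R_1(d(\textbf{r},\overline{0}),\overline{\mu}) \le R_1(c_0,\overline{\mu})$. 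Averaging over $\overline{W}$ and adding back the common contribution from $\{\overline{W}\notin B_{\textbf{r}}\}$ gives $R(\overline{\mu},\delta_\phi) \le R(\overline{\mu},\delta_{c_0})$ for every $\overline{\mu}$. The main obstacle I anticipate is the clean justification that $c_0$ is the $r_i\to\infty$ limit of $d(\textbf{r},\overline{0})$ (equivalently, identifying $c_0$ as the relevant endpoint so that monotonicity in $\textbf{r}$ does the comparison) — the rest is assembling Lemma parts (1)–(4) with the U-shapedness argument, which is routine.
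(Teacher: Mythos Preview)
Your proposal is correct and follows essentially the same route as the paper: decompose the risk over the events $\{\overline W\in B_{\textbf r}\}$ and its complement, note that the two estimators coincide on the complement, and on $B_{\textbf r}$ use the sandwich $d(\textbf r,\overline\mu)\le d(\textbf r,\overline 0)\le c_0$ (reversed for $1<q<(n+1)/2$) together with the bowl shape of $R_1(\cdot,\overline\mu)$. The only difference is that you spell out the justification of $d(\textbf r,\overline 0)\le c_0$ via Lemma parts (3)/(4) and the identification of $c_0$ as the $r_i\to\infty$ limit, whereas the paper simply asserts this chain of inequalities; your added detail is correct and indeed what is implicitly intended.
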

\begin{proof}
    We can write the risk value of an estimator $\delta_{\phi}(X^{(1)},T)$, as
    \begin{equation} \label{risktheorem1 R1}
    \begin{split}
        R(\overline{\mu},\delta_{\phi}(X^{(1)},T))=E_{\overline{\mu}}\left[L\left({d({\textbf{r}},\overline{0})}{T^{k(1-q)}}\right)\bigg{|}\overline{W}\in B_{{\textbf{r}}} \right]P\left(\overline{w}\in B_{\textbf{r}} \right)+ \\ E_{\overline{\mu}}\left[L\left({c_0}{T^{k(1-q)}}\right)\bigg{|}\overline{W}\notin B_{{\textbf{r}}} \right]P\left(\overline{w}\notin B_{\textbf{r}} \right).
        \end{split}
    \end{equation}
    If $0<q<1$ and $\overline{w}\in B_{\textbf{r}}$, $d({\textbf{r}},\overline{w})\le d({\textbf{r}},\overline{0})\le c_0$, and since $R_{1}(d,\overline{\mu})$ is a bowl-shaped function, so we get
    \begin{equation}\label{risktheorem1 r2}
        E_{\overline{\mu}}\left[L\left({d({\textbf{r}},\overline{0})}{T^{k(1-q)}}\right)\bigg{|}\overline{W}\in B_{{\textbf{r}}} \right] \le 
        E_{\overline{\mu}}\left[L\left({c_0}{T^{k(1-q)}}\right)\bigg{|}\overline{W}\in B_{{\textbf{r}}}  \right]. 
    \end{equation}
    Or if $1<q<(n+1)/2$ and $\overline{w}\in B_{\textbf{r}}$, $d({\textbf{r}},\overline{w})\ge d({\textbf{r}},\overline{0})\ge c_0$, and since $R_{1}(d,\overline{\mu})$ is a bowl-shaped function, so we get
    \begin{equation}\label{risktheorem1 r2 2}
        E_{\overline{\mu}}\left[L\left({d({\textbf{r}},\overline{0})}{T^{k(1-q)}}\right)\bigg{|}\overline{W}\in B_{{\textbf{r}}} \right] \le 
        E_{\overline{\mu}}\left[L\left({c_0}{T^{k(1-q)}}\right)\bigg{|}\overline{W}\in B_{{\textbf{r}}}  \right]. 
    \end{equation}
    From equation(\ref{risktheorem1 R1}), (\ref{risktheorem1 r2}) and (\ref{risktheorem1 r2 2}), we concluded that 
    \begin{equation}
        R(\overline{\mu},\delta_{\phi}(X^{(1)},T))\le R(\overline{\mu},\delta_{c_0}(X^{(1)},T)).
    \end{equation}
    This completes the proof.
\end{proof}
\noindent We will now illustrate the existence of an estimate for finite values of $k(=1,2)$, in the next example, assuming location parameters are positive. 

\begin{example} \label{example1}
    Consider a sample of exponentially distributed population $\exp(\sigma, \mu)$, the equivariant estimate $\delta_{\phi}(X^{(1)},T)$ has the risk value nowhere greater than the risk value of estimator BAEE, $\delta_{c_0}(X^{(1)},T)$ for the quadratic error function $L(t)$, and any real number $r_1 >0$, where 
    \begin{equation}
        \phi(w)=\begin{cases}
            \frac{\Gamma(n-1+(1-q))}{\Gamma(n-1+2(1-q))}\left(\frac{1-{(1+nr_1)}^{q-n}}{1-{(1+nr_1)}^{2q-n-1}}\right), &  \text{\ \ \  if \ } w\in B_{r_1}, \\
            c_0, & \text{ \ \  otherwise.} 
        \end{cases}
    \end{equation}
\end{example}
\begin{Solution}
    As we can see that this case is for $k=1$, and we know that the value of $d_{r_1},$ at which $R_1(d,\overline{0})$ 
    is minimum given by
    \begin{equation}
    \begin{split}
        d(r_1,0)=&\frac{E_{0,1}\left( T^{(1-q)}| W\in B_{r_1} \right)}{E_{0,1}\left( T^{2(1-q)}| W\in B_{r_1} \right)}\\
        =& \frac{\int_{0}^{\infty}t^{1-q}f_{T|B_{r_1}}(t|w)dt}{\int_{0}^{\infty}t^{2(1-q)}f_{T|B_{r_1}}(t|w)dt}\\
        =&\frac{\Gamma(n-1+1-q)}{\Gamma(n-1+2(1-q))}\left(\frac{1-{(1+nr_1)}^{q-n}}{1-{(1+nr_1)}^{2q-n-1}}\right).
        \end{split}
    \end{equation}
    Dominance of improved smooth estimator can be seen from Theorem(\ref{dominance theorem1}). 
\end{Solution}

\begin{Note}
    It can be seen that if $r_1 \rightarrow \infty$, we have
    \begin{equation}
        {(1+nr_1)}^{q-n}\rightarrow 0 \text{ \ \ and } {(1+nr_1)}^{2q-n-1}\rightarrow 0.
    \end{equation}
    Thus we get, $\lim_{r_1 \rightarrow \infty}d(r_1,0)=c_0$.
\end{Note}
\begin{example}\label{example2}
    Consider two samples from exponentially distributed population $\exp(\sigma, \mu_1)$ and $\exp(\sigma, \mu_2)$, the equivariant estimate $\delta_{\phi}(X^{(1)},T)$ has the risk value nowhere greater than the risk value of estimator BAEE $\delta_{c_0}(X^{(1)},T),$ for the quadratic error function $L(t)$, and any real number $r_1,r_2 >0$, where 
    \begin{equation}
        \phi(\overline{w})=\begin{cases}
            \frac{\Gamma(2(n-1)+(1-q))}{\Gamma(2(n-1)+2(1-q))}\left(\frac{1-{(1+nr_1)}^{q+1-2n}-{(1+nr_2)}^{q+1-2n}+{(1+nr_1+nr_2)}^{q+1-2n}}{1-{(1+nr_1)}^{2q-2n}-{(1+nr_2)}^{2q-2n}+{(1+nr_1+nr_2)}^{2q-2n}}\right), &  \text{\ \ \  if \ } (w_1,w_2)\in B_{{\textbf{r}}}, \\
            c_0, & \text{ \ \  otherwise.} 
        \end{cases}
    \end{equation}
\end{example}
\begin{Solution}
    This case is when $k=2$ and ${\textbf{r}}=(r_1,r_2)$, Following the same steps of Example(\ref{example1}), we get the desired results.
\end{Solution}
\begin{Note}
    It can be seen that if $r_1$, and $r_2$ tends to infinity, we get
    \begin{equation}
        \lim_{r_1\rightarrow \infty, r_2 \rightarrow \infty}d({\textbf{r}},\overline{0})=c_0.
    \end{equation}
\end{Note}
\noindent Now consider the vector ${\textbf{r}}^{1}=(r_{11},r_{12},...,r_{1k})$ such that $0<r_{1i}<r_{i}$ for $i=1,2,...,k$. let $B_{{\textbf{r}}^{1},{\textbf{r}}}=\times_{i=1}^{k}(r_{1i},r_i]$. Let us define the estimate in the form 
\begin{equation}
    \delta_{d,{\textbf{r}},{\textbf{r}}^{1}}(X^{(1)},T)=\begin{cases}
      \frac{d({\textbf{r}}^{1},\overline{0})}{{T}^{k(q-1)}} , & \text{if } \overline{W} \in B_{{\textbf{r}}^{1}} \\
      \frac{d({\textbf{r}},\overline{0})}{{T}^{k(q-1)}} ,& \text{if } \overline{W} \in B_{{\textbf{r}}^{1},{\textbf{r}}} \\
    c_0 ,& \text{otherwise}.
    \end{cases}
\end{equation}
We just need to find $d({\textbf{r}}^{1},\overline{0})$, for this general form and similar to Theorem(\ref{dominance theorem1}), we can state that $\delta_{\phi_{{\textbf{r}}^{1},{\textbf{r}}}}(X^{(1)},T)$ has risk with respect to loss function $L(t)$, less than or equal to $\delta_{c_0}(X^{(1)},T)$, where 
\begin{equation}
    \phi_{{\textbf{r}}^{1},{\textbf{r}}}=\begin{cases}
        d({\textbf{r}}^{1},\overline{0}), & \text{ \ \ \ if } \overline{w}\in B_{\textbf{r}} \\
        d({\textbf{r}},\overline{0}), & \text{ \ \ \ if } \overline{w}\in B_{{\textbf{r}}^{1},{\textbf{r}}} \\
        c_0, & \text{ \ \ \ otherwise. } 
    \end{cases}
\end{equation}
Consider the partition of set $\times_{i=1}^{k}(0,r_i]$ as $P_{ij}=\{ 0=r_{j,0}^{i},r_{j,1}^{i},...,r_{j,m_j}^{i}  \},$ for $i=1,2,...,k$ and $j$ is denoting the partition number and $m_j$ is the number of points in the partition. This can be seen in {\cite{patra2023inadmissibilityArxiv}}. For better clarity,
\begin{equation*}
    \begin{cases}
        0=r_{j,0}^{1}<r_{j,1}^{1}<r_{j,2}^{1}<\cdots<r_{j,m_j}^{1}<\infty \\
        0=r_{j,0}^{2}<r_{j,1}^{2}<r_{j,2}^{2}<\cdots<r_{j,m_j}^{2}<\infty \\
        \vdots \\
        0=r_{j,0}^{k}<r_{j,1}^{k}<r_{j,2}^{k}<\cdots<r_{j,m_j}^{k}<\infty.
    \end{cases}
\end{equation*}
Take ${\textbf{r}}_{j,l}=(r_{j,l}^{1},r_{j,l}^{2},...,r_{j,l}^{k})$, where $l=1,2,...,m_j$ and denote
\begin{equation}
    \phi_{j}(\overline{w})=\begin{cases}
        d({\textbf{r}}_{j,1},\overline{0}), & \text{ \ \ if } r_{j,0}^{i}<w_i\le r_{j,1}^{i},i=0,...,k \\ 
        d({\textbf{r}}_{j,2},\overline{0}), & \text{ \ \ if } r_{j,1}^{i}<w_i\le r_{j,2}^{i},i=0,...,k \\ 
        d({\textbf{r}}_{j,3},\overline{0}), & \text{ \ \ if } r_{j,2}^{i}<w_i\le r_{j,3}^{i},i=0,...,k \\ 
        \vdots \\
        d({\textbf{r}}_{j,m_j},\overline{0}), & \text{ \ \ if } r_{j,m_j-1}^{i}<w_i\le r_{j,m_j}^{i},i=0,...,k \\ 
        c_0, & \text{ \ \  otherwise.}
    \end{cases}
\end{equation}
Assuming that the partition is such that 
\begin{equation}
    \max_{1\le k \le m_j}|r_{j,k}^{v}-r_{j,k-1}^{v}|\rightarrow 0 \text{ \ \ as \ \ } j\rightarrow\infty \text{ \ \ for \ \ } v=1,2,...,k.
\end{equation}
Because of this, $\phi_{j}(\overline{w})\rightarrow \Phi^{*}(\overline{w})$ pointwise as $j\rightarrow\infty$. From Theorem(\ref{dominance theorem1}), we know that for each $j$, $\delta_{\phi_{j}}$ is dominating the estimate $\delta_{c_0}$, and from Fatou's lemma, we get the following 
theorem as a result.
\begin{theorem} \label{ dominance theorem 2}
    For the quadratic error function $L(t)$, the risk value of the estimator $\delta_{\Phi^{*}}(X^{(1)},T)$, is nowhere greater than the risk value of the estimator $\delta_{c_0}(X^{(1)},T)$, where
    \begin{equation}
        \Phi^{*}({\overline{w}})=\begin{cases}
            d(\overline{w},\overline{0}), & w_1>0,w_2>0,...,w_k>0, \\
            c_0, & otherwise.
        \end{cases}
    \end{equation}
\end{theorem}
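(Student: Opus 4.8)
The plan is to present $\delta_{\Phi^{*}}$ as a pointwise limit of the step-function estimators $\delta_{\phi_{j}}$ constructed above and then push the risk domination through Fatou's lemma.

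First I would invoke what has already been assembled: for every partition index $j$ the estimator $\delta_{\phi_{j}}$ dominates $\delta_{c_0}$, i.e. $R(\overline{\mu},\delta_{\phi_{j}})\le R(\overline{\mu},\delta_{c_0})$ for all $\overline{\mu}$. This is Theorem~\ref{dominance theorem1} applied cell by cell along the nested boxes of the $j$-th partition (the device behind $\delta_{d,\textbf{r},\textbf{r}^{1}}$): on each cell $\times_{i=1}^{k}(r_{j,l-1}^{i},r_{j,l}^{i}]$ the conditional risk $R_{1}(\cdot,\overline{\mu})$ is bowl-shaped with minimiser $d(\textbf{r}_{j,l},\overline{w})$ lying on the correct side of $d(\textbf{r}_{j,l},\overline{0})$ by parts~2--4 of the Lemma, so replacing $c_0$ on that cell by $d(\textbf{r}_{j,l},\overline{0})$ does not raise the conditional risk; averaging the conditional comparisons against the cell probabilities gives the unconditional statement.

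Next I would establish $\phi_{j}(\overline{w})\to\Phi^{*}(\overline{w})$ pointwise. Fix $\overline{w}$ with every $w_{i}>0$. Because the meshes satisfy $\max_{k}|r_{j,k}^{v}-r_{j,k-1}^{v}|\to 0$ and the partitioned boxes eventually contain $\overline{w}$, for all large $j$ the point $\overline{w}$ falls in a cell whose defining vector $\textbf{r}_{j,l(j)}$ converges coordinate-wise to $\overline{w}$; by continuity of $\textbf{r}\mapsto d(\textbf{r},\overline{0})$ (which follows by dominated convergence from the first-order condition defining $d(\textbf{r},\overline{0})$, and is visible in the closed forms of Examples~\ref{example1} and~\ref{example2}) we get $\phi_{j}(\overline{w})=d(\textbf{r}_{j,l(j)},\overline{0})\to d(\overline{w},\overline{0})=\Phi^{*}(\overline{w})$, while on the probability-zero set where some $w_{i}=0$ both sides equal $c_0$.

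Finally, since $L(t)=(t-1)^{2}$ is continuous and non-negative, for almost every realisation $L\!\left(\phi_{j}(\overline{W})/T^{k(q-1)}\right)\to L\!\left(\Phi^{*}(\overline{W})/T^{k(q-1)}\right)$, and Fatou's lemma gives
\[
R(\overline{\mu},\delta_{\Phi^{*}})=E_{\overline{\mu}}\!\left[L\!\left(\tfrac{\Phi^{*}(\overline{W})}{T^{k(q-1)}}\right)\right]\le\liminf_{j\to\infty}E_{\overline{\mu}}\!\left[L\!\left(\tfrac{\phi_{j}(\overline{W})}{T^{k(q-1)}}\right)\right]=\liminf_{j\to\infty}R(\overline{\mu},\delta_{\phi_{j}})\le R(\overline{\mu},\delta_{c_0})
\]
for every $\overline{\mu}$, which is the claim; finiteness of $R(\overline{\mu},\delta_{\Phi^{*}})$ comes for free because the BAEE has finite risk for $q<(n+1)/2$. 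The step I expect to require the most care is the pointwise convergence — precisely, the continuity (or monotone sandwiching via parts~3--4 of the Lemma) of $d(\cdot,\overline{0})$ together with the guarantee that the partitions exhaust $(0,\infty)^{k}$ — since the domination of each $\delta_{\phi_{j}}$ and the Fatou step are then routine.
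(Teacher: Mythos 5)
Your proposal is correct and follows essentially the same route as the paper: per-partition dominance of each $\delta_{\phi_{j}}$ over $\delta_{c_0}$ from Theorem~\ref{dominance theorem1}, pointwise convergence $\phi_{j}\to\Phi^{*}$ as the mesh shrinks, and Fatou's lemma to pass the risk inequality to the limit. You actually supply more detail than the paper does on the pointwise-convergence step (continuity of $d(\cdot,\overline{0})$ via the first-order condition), which the paper simply asserts.
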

\begin{proof}
We can write from the Theorem(\ref{dominance theorem1}) that 
\begin{equation} \label{first equation of dominance theorem 2}
    R(\overline{\mu},\delta_{\phi_{j}}(X^{(1)},T))\le R(\overline{\mu},\delta_{c_0}(X^{(1)},T)), \text{ \ \ \ } j=1,2,... 
\end{equation}  
Since $\phi_{j}(\overline{w})\rightarrow \Phi^{*}(\overline{w})$ as $j\rightarrow \infty$, so from Fatou's lemma we can write
\begin{equation} \label{2nd equation of dominance theorem 2}
    R(\overline{\mu},\delta_{\Phi^{*}}(X^{(1)},T)) \le \liminf_{j \rightarrow \infty} R(\overline{\mu},\delta_{\phi_{j}}(X^{(1)},T)).
\end{equation}
Combining equations(\ref{first equation of dominance theorem 2}), (\ref{2nd equation of dominance theorem 2}), we can write
\begin{equation}
    R(\overline{\mu},\delta_{\Phi^{*}}(X^{(1)},T)) \le \liminf_{j \rightarrow \infty} R(\overline{\mu},\delta_{\phi_{j}}(X^{(1)},T)) \le R(\overline{\mu},\delta_{c_0}(X^{(1)},T)),
\end{equation}
gives $R(\overline{\mu},\delta_{\Phi^{*}}(X^{(1)},T))\le R(\overline{\mu},\delta_{c_0}(X^{(1)},T))$, which completes the proof.
\end{proof}
\noindent 
Referring to the previously mentioned examples(\ref{example1}) and (\ref{example2}), the above theorem's applicability becomes more clear.
\begin{example} \label{example3}
    For a sample of exponentially distributed population $\exp(\sigma, \mu)$, the equivariant estimate $\delta_{\Phi^{*}}(X^{(1)},T)$ has the risk value nowhere greater than the risk value of estimator BAEE $\delta_{c_0}(X^{(1)},T)$, subject to the quadratic error function $L(t)$, where 
    \begin{equation}
        \Phi^{*}(w_1)=\begin{cases}
            \frac{\Gamma(n-1+(1-q))}{\Gamma(n-1+2(1-q))}\left(\frac{1-{(1+nw_1)}^{q-n}}{1-{(1+nw_1)}^{2q-n-1}}\right), &  \text{\ \ \  if \ } w_1\ge 0, \\
            c_0, & \text{ \ \  otherwise.} 
        \end{cases}
    \end{equation}
\end{example}
\begin{example}
    For the two samples of exponentially distributed population $\exp(\sigma, \mu_1)$ and $\exp(\sigma, \mu_2)$, the equivariant estimate $\delta_{\Phi^{*}}(X^{(1)},T)$, has the risk value nowhere greater than the risk value of estimator BAEE $\delta_{c_0}(X^{(1)},T)$, subject to the quadratic error function $L(t)$, where 
    \begin{equation}
        \Phi^{*}(\overline{w})=\begin{cases}
            \frac{\Gamma(2(n-1)+(1-q)}{\Gamma(2(n-1)+2(1-q))}\left(\frac{1-{(1+nw_1)}^{q+1-2n}-{(1+nw_2)}^{q+1-2n}+{(1+nw_1+nw_2)}^{q+1-2n}}{1-{(1+nw_1)}^{2q-2n}-{(1+nw_2)}^{2q-2n}+{(1+nw_1+nw_2)}^{2q-2n}}\right), &  \text{\ \ \  if \ } w_1\ge 0, w_2 \ge 0, \\
            c_0, & \text{ \ \  otherwise.} 
        \end{cases}
    \end{equation}
    
\end{example}
\subsection{Bayes Estimator}
In this section, we look at the Bayesian estimate of $\Theta(\sigma)$, for the quadratic loss function given in (\ref{lossfunction}). Based on the existing literature, we take inverse gamma distributions, denoted as $IG(\alpha,\beta)$ with density 
\begin{equation}\label{inverse gamma density function}
    g_{\alpha,\beta}(\sigma)=\frac{{\alpha}^{\beta}}{\Gamma(\beta)}{\left(\frac{1}{\sigma} \right)}^{\beta+1}e^{-\left(\frac{\alpha}{\sigma} \right)}, \textit{ \ \ \ }\sigma>0, \ \ \alpha>0, \ \ \beta>0,
\end{equation}
for the parameter $\sigma$ as a prior distribution. Using the transformation of random variables, the density function for the random variable $T$, conditional on $\sigma$ values, is
\begin{equation}
    f(t|\sigma)=\frac{t^{k(n-1)-1}e^{-\frac{t}{\sigma}}}{{\sigma}^{k(n-1)}\Gamma(k(n-1))}, \textit{ \ \ \ where \ \  $0<t<\infty.$}
\end{equation}
Now the posterior density using Bayes formula is,
\begin{equation}\label{bayesformula}
    h(\sigma|t)=\frac{f(t|\sigma)g_{\alpha,\beta}(\sigma)}{f(t)}.
\end{equation}
where the probability density function $f(t)$ of $T$, independent of $\sigma$, is computed as
\begin{equation*}
    f(t)=\int_{0}^{\infty}f(t|\sigma)g_{\alpha,\beta}(\sigma)d{\sigma}=\frac{{\alpha}^{\beta}x^{k(n-1)-1}}{\Gamma(\beta)\Gamma(k(n-1))}x^{-(k(n-1)+\beta)}\Gamma(k(n-1)+\beta).
\end{equation*}
Using the (\ref{bayesformula}), the posterior density is given by
\begin{equation}\label{posterior density}
    h(\sigma|t)=\frac{{\sigma}^{-(k(n-1)+\beta+1)}e^{-\left(\frac{x+\alpha}{\sigma} \right)}}{{(x+\alpha)}^{-(k(n-1)+\beta)}\Gamma(k(n-1)+\beta)}.
\end{equation}
As (\ref{lossfunction}), is the quadratic error function, the Bayes estimator for $\Theta(\sigma)$, is computed using the relation obtained after minimizing the corresponding risk value, we get
\begin{equation}\label{bayes estimate 1}
\begin{split}
    {\delta}_{\alpha,\beta}^{B}(T)&=\frac{{E({\sigma}^{k(1-q)}}|T)}{{E({\sigma}^{2k(1-q)}}|T)} \\
   &=\frac{1}{{(T+\alpha)}^{k(q-1)}}\frac{\Gamma(k(n-1)+k(1-q)+\beta)}{\Gamma(k(n-1)+2k(1-q)+\beta)}.
    \end{split}
\end{equation}
which is the Bayesian estimate based on the prior inverted gamma distribution $IG(\alpha,\beta)$ and new information from the sample of size $n$ from $k$ many exponentially distributed populations each with different location parameter $\mu_i, i=1,2,...,k$. Bayesian framework incorporate many estimation techniques as a particular case. 

\begin{remark}
    For $\alpha\to 0$ and $\beta\to 0$, Bayesian estimate behaves same as the BAEE given in Theorem(\ref{BAEE}).
\end{remark}

\section{Numerical Comparisons}\label{numerical comparisons section6}
In the previous sections, the derivation of the Stein's estimator and Brewster-Zidek type estimator of the Tsallis entropy for several exponentially distributed population is presented. In this section, we test the performance of the derived estimators through simulations using the 'rtpexp' function from the 'twopexp' package in RStudio, for a range of sample sizes. Percentage risk improvement(PRI) is one of the measure used for comparison of the estimators. The PRI of the estimator $\delta_2$ relative to the estimator $\delta_1$ is given by
\begin{equation}\label{PRIformula}
    PRI(\delta_2)=\frac{Risk(\delta_1)-Risk(\delta_2)}{Risk(\delta_1)}\times100.
\end{equation}

\begin{figure}[ht!]
\centering
\includegraphics[width=110mm]{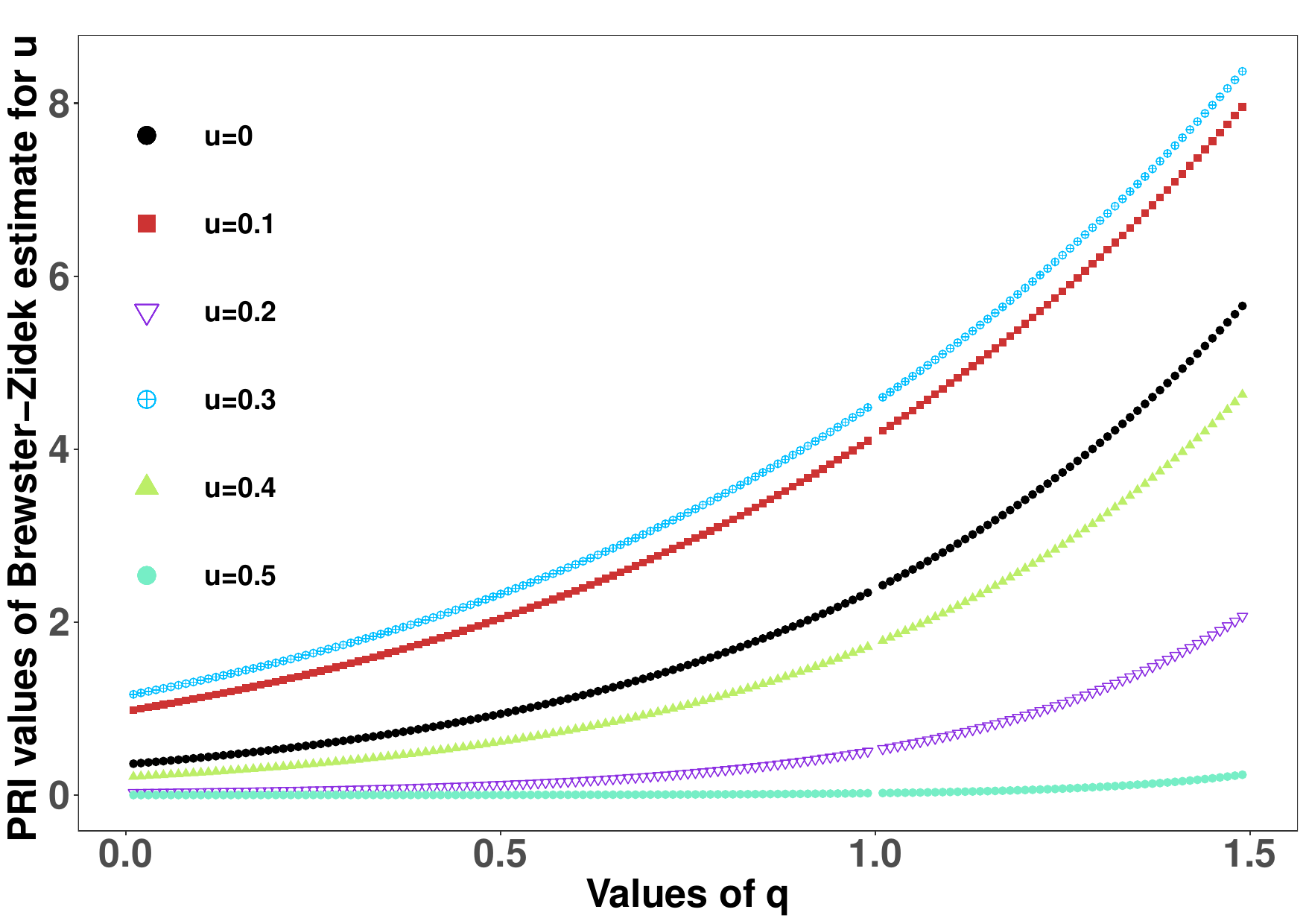}
\caption{PRI values of Brewster-Zidek estimate for a simulated sample of size $n=4, \sigma=1$, $u=0,0.1,0.2,0.3,0.4,0.5$ and $q>0.$  \label{figure1}}
\end{figure}

\begin{figure}[ht!]
\centering
\includegraphics[width=115mm]{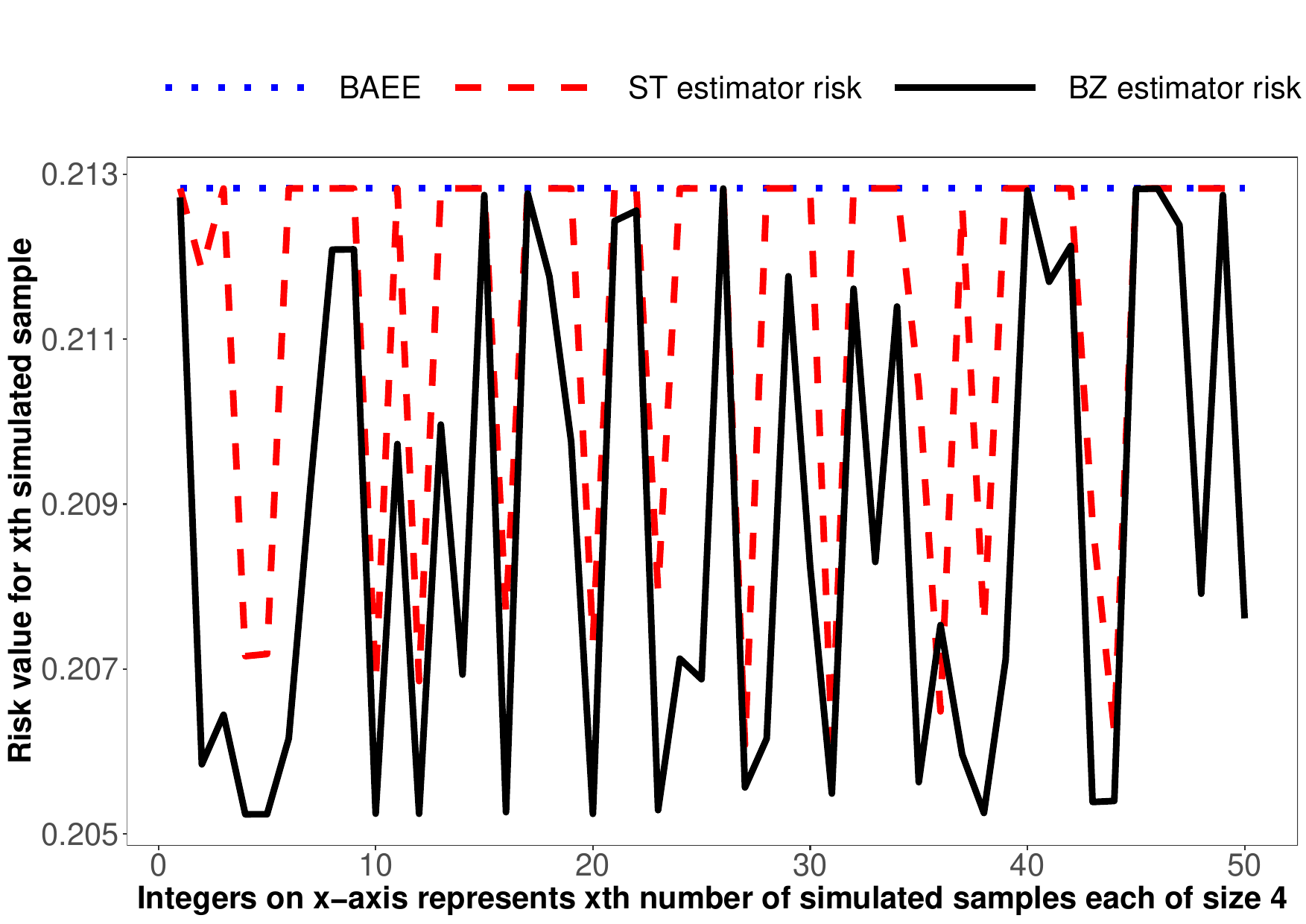}
\caption{Risk value plot of BAEE, Stein-type and Brewster-Zidek estimator for $\sigma=1, u=0.1, q=0.1$ and $50$ samples of sizes $4$  \label{figure3}}
\end{figure}

\begin{figure}[ht!]
\centering
\includegraphics[width=115mm]{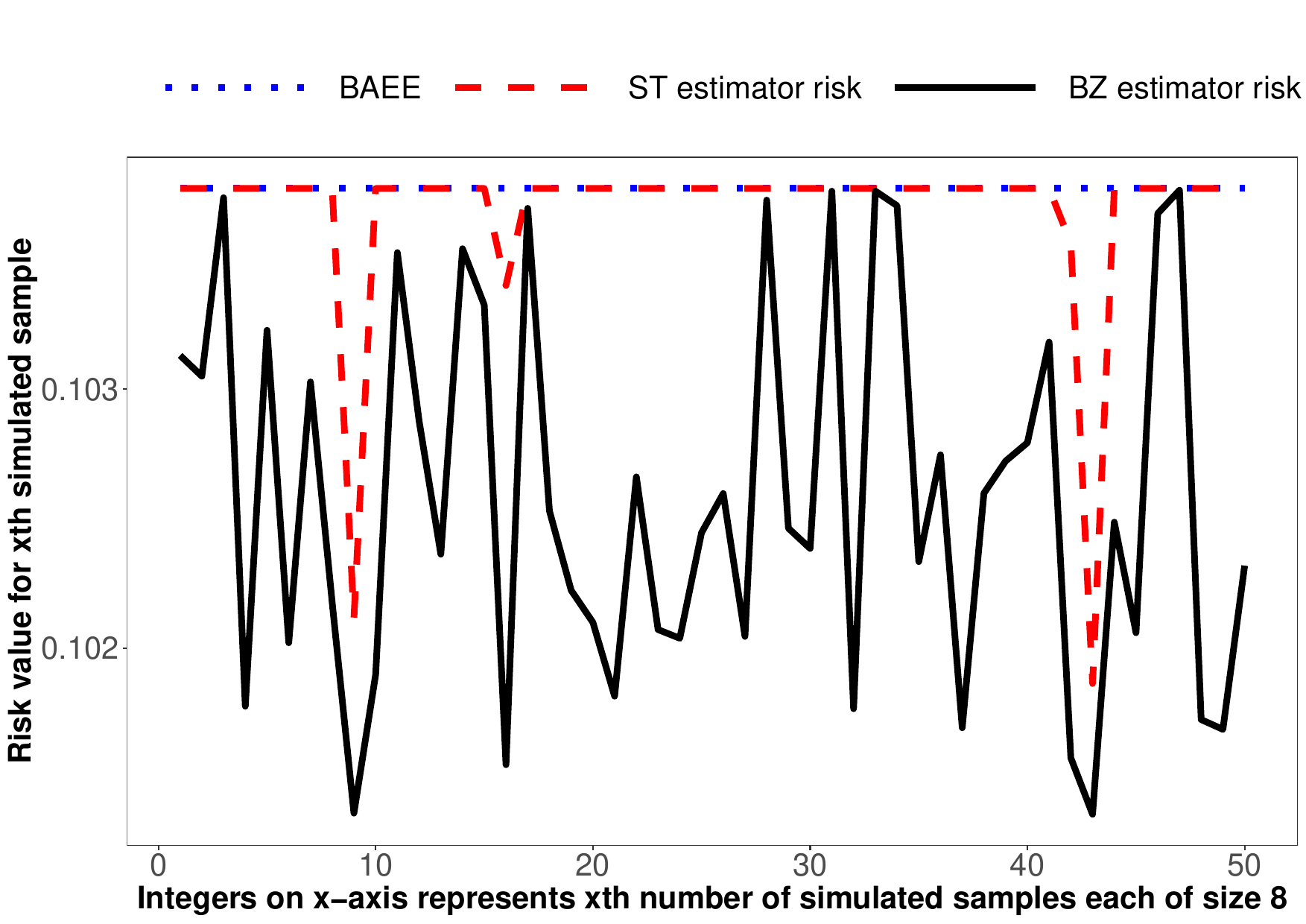}
\caption{Risk value plot of BAEE, Stein-type and Brewster-Zidek estimator for $\sigma=1, u=0.1, q=0.1$ and $50$ samples of sizes $8$  \label{figure4}}
\end{figure}
\begin{figure}[ht!]
\centering
\includegraphics[width=115mm]{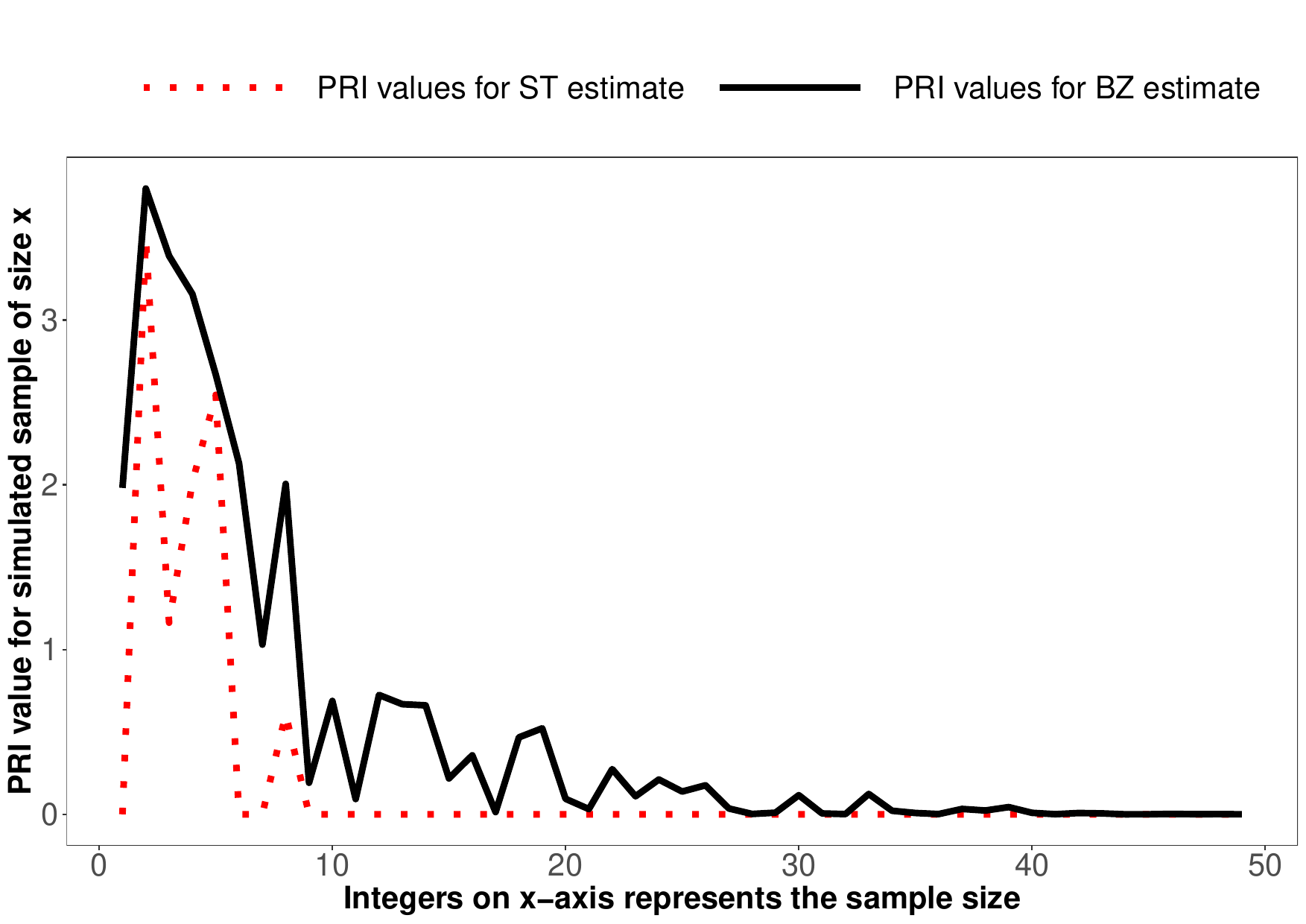}
\caption{PRI values plot of Stein-type and Brewster-Zidek estimator for $\sigma=1, u=0.1, q=0.1$ and sample size vary from $2$ to $50$  \label{figure5}}
\end{figure}

For a single population with $\sigma=1, n=4,6,8$ and varying values of location parameter $u$, over $10000$ random samples were generated. Table(\ref{table1}) in Appendix, presents the PRI values for the indicated parameters, calculated using the simulated sample corresponding to the specified parameter values. We calculated the PRIs for various values of the entropic index parameter $q$ by altering the parameter $q$, for a single sample corresponding to the specified parameter. In Table(\ref{table2}) in Appendix, we calculated the PRI values for estimates with large sample sizes($n=10,15,20$ and $30$). The tables of PRI values are presented in the appendix for reference. We use $\delta_{ST}$ and $\delta_{BZ}$ to denote the Stein-type estimator and Brewster-Zidek type estimate, respectively. Some of the observations are as follows:
\begin{enumerate}
    \item The presence of more positive PRI for $\delta_{BZ}$ than for $\delta_{ST}$ implies that the Brewster estimate provides greater estimate improvement. 
    \item The PRI for $\delta_{ST}$ is zero for a greater number of parameters, suggesting that $\delta_{ST}$ has not significantly improved.
    \item As the sample size $n$ increases, the PRI become zero, signifying improved estimates are converging to the BAEE.
\end{enumerate}

\noindent PRI values of a simulated samples are computed for parameters $\sigma=1,n=4,$ and $q$ ranging from $0<q<1.5$, can be seen in Figure(\ref{figure1}). Figure illustrates the variation in PRI values for Brewster-Zidek improved estimator for different values of $u=0,0.1,0.2,0.3,0.4$, and $0.5$. It is clear from the Figure(\ref{figure1}), increase in value of $q$, increases the PRI value for Brewster-Zidek estimate. In the figure, values corresponding to $q=1$ are omitted. Based on the simulated samples, it can be observed that there is no definitive relation between PRI of Brewster-Zidek estimate and location parameter values $u$. This observation is supported by the analysis of graphs corresponding to $u=0,0.1,$ and $0.4$.

To further clarify the behaviour of risk, $50$ samples were generated for parameters $\sigma=1,u=0.1,q=0.1$ with a sample size of $4$ and $8$. The corresponding risk values for both sample sizes are depicted in Figure(\ref{figure3}) and Figure(\ref{figure4}). The selection of parameters and the sample numbers is motivated by the intention to present a clear illustration of risk behaviour and choice for a higher number of samples may not yield a visually clear representation. The dotted straight line at the top represents the constant risk associated with the BAEE. The lower values of risk associated with the Brewster-Zidek estimator in Figure(\ref{figure3}) clearly indicate a significant improvement over the BAEE. Although, the Stein-type improved estimator exhibits a lower risk value, it can be seen that the Brewster-Zidek improved estimate dominates in terms of achieving an even lower risk value.

The effect of sample size on the risk value of estimates can be seen in the Figure(\ref{figure5}). We plotted PRI values for both Stein-type and Brewster-type estimates as the generated sample size increases from $2$ to $50$. It is readily noticeable that the risk value decreases for both estimates as the sample size increases. Upon reaching an adequate sample size, it is observed that the risk value is the same for Stein-type and Brewster estimates, which is equal to risk value for BAEE.

\section{Concluding Remarks}\label{conclusion section7}
We have computed the Tsallis entropy for $k$ number of independently distributed exponential populations, with a common scale parameter and distinct location parameters, for $q>0$. We considered the problem of estimating the Tsallis entropy for $k$ independently distributed exponential populations, with a common scale parameter and distinct location parameters, for $q>0$. The BAEE for the function of scale parameter associated with Tsallis entropy, is derived under the strictly bowl-shaped quadratic loss function with restrictions on $q$. Further, the inadmissibility of the BAEE is established by computing the existence of the Stein-type estimate. A class of smooth improved estimates for the parameter function based on Brewster technique is provided. It is shown that the Bayesian estimates of the parameter function with inverse gamma priors, under the quadratic loss function, provides the BAEE. A simulation study is conducted using specific parameter values to illustrate the performance of estimators. It is noted that the PRI value increases with increases in $q$. The table(\ref{table1}) demonstrate that the improved estimates approach the BAEE as the sample size increases. The entire analysis can also be conducted with any bowl-shaped loss function, such as Linex and entropic loss functions.

\section*{Appendix}

\begin{table}[!h] 
\caption { PRI values of $\delta_{ST}$ and $\delta_{BZ}$ for generated samples of sizes $n=4,6,8$ with parameters $\sigma=1,u=0.1,0.2,0.3,0.4,0.5,0.6$ and $q=0.2,0.4,0.6,0.8,1.2,1.4$ } \label{table1}
\begin{center}
{\begin{tabular}{ >{\centering\arraybackslash}p{1cm} >{\centering\arraybackslash}p{1cm}>{\centering\arraybackslash}p{1.8cm}>{\centering\arraybackslash}p{1.8cm}>{\centering\arraybackslash}p{1.8cm}>{\centering\arraybackslash}p{1.8cm}>{\centering\arraybackslash}p{1.8cm}>{\centering\arraybackslash}p{1.8cm}}
\hline
\multicolumn{2}{c}{n} &\multicolumn{2}{c}{4}&\multicolumn{2}{c}{6}&\multicolumn{2}{c}{8}\\
\hline
  $u$ & $q$ & $\delta_{ST}$ &$\delta_{BZ}$ & $\delta_{ST}$ &$\delta_{BZ}$  & $\delta_{ST}$ &$\delta_{BZ}$ \\ 

\cline{1-8}
 \multirow{ 8}{*}[15pt]{ $0.1$} & $0.2$&$ 0.168570$ & $3.469226$ &$2.335328$  &$2.988272$ 
 &$0.727848$  &$2.281872$ \\

 &$0.4$ & $1.087688$  & $4.129227$ &
 $2.665745$ & $3.283123$ &
 $0.985590$ & $2.487200$  \\

 &$0.6$ & $2.118459$ & $4.890057$ &
 $3.016849$ & $3.592175$ &
 $1.257724$ & $2.701414$\\

&$0.8$  & $3.267641$ & $5.760361$ &
$3.386822$ & $3.913180$ 
& $1.542669$ & $2.923200$ \\

 &$1.2$  & $0$ & $7.851997$ &
 $0$ & $4.577070$ &
 $0$  & $3.381832$ \\

 &$1.4$  & $0$ & $9.071442$  &
 $0$ & $4.909568$ &
 $0$ & $3.613339$ \\
 \cline{1-8}

 
 \multirow{ 8}{*}[15pt]{ $0.2$ }&$0.2$ &$1.998670$ & $3.746505$ & 
 $0$ & $0.380079$ & 
 $0$ & $0.767991$ \\

 & $0.4$& $2.751726$ & $4.371731$ &
 $0$ & $0.497380$ & 
 $0$  & $0.899383$ \\

 &$0.6$ & $3.597183$ & $5.076163$ &
 $0$ & $0.647878$ & 
 $0$ &$1.049318$ \\

& $0.8$ & $4.540664$ & $5.863484$ &
$0$ & $0.839757$ &
$0$ &$1.219447$ \\

 & $1.2$ & $0$ & $7.687807$ & 
 $6.855561$ & $1.387746$ &
 $4.695817$ & $1.626045$ \\

 &$1.4$  & $0$  & $8.712302$ &
 $7.333795$ & $1.767291$ &
 $4.613808$ & $1.864516$ \\
 \cline{1-8}

 \multirow{ 8}{*}[15pt]{ $0.3$  }&$0.2$   & $0$ & $3.119990$ &
 $0$ & $2.080602$ & 
 $0$ & $0.002599$ \\

 &$0.4$ & $0$ &$3.783965$ &
 $0$ & $2.411114$ & 
 $0$ & $0.003937$ \\

 &$0.6$ & $0.097226$ &$4.565649$ &
 $0$ &$2.782047$ &
 $0$ &$0.005951$ \\

& $0.8$ & $1.486422$ & $5.478970$ &
$0$ & $3.195397$ &
$0$ & $0.008972$ \\

 & $1.2$ & $0$ & $7.748070$ &
 $0.369579$ & $4.152256$ &
 $1.179972$ & $0.020209$ \\

 & $1.4$ & $0$ & $9.115696$ &
 $0$ & $4.692984$ &
 $0$ & $0.030159$\\
 \cline{1-8}

 \multirow{ 8}{*}[15pt]{ $0.4$  }&$0.2$ & $1.293017 $ & $3.642593$ & $2.894028$ & $1.310825$ &
 $0$ & $0.207280$ \\

 &$0.4$ & $2.113090$ & $4.287444$ &
 $0$ & $1.576915$ &
 $0$ & $0.259081$ \\

 &$0.6$ & $3.034171$ & $5.021366$ &
 $0$ & $1.888542$ &
 $0$ & $0.322611$ \\

& $0.8$ & $4.062510$ &  $5.850101$ & 
$0$ & $2.251073$ & 
$0$ & $0.400121$\\

 & $1.2$ & $0$ & $7.801728$ &
 $4.408508$ & $3.148673$ &
 $0$ & $0.607501$ \\

 & $1.4$ & $0$ & $8.916201$ &
 $3.804797$ & $3.691120$ &
 $0$ & $0.743157$\\
  \cline{1-8}

 \multirow{ 8}{*}[15pt]{ $0.5$ }&$0.2$ & $3.729161$ & $1.595773$ &
 $0$ & $0.003971$ &
 $0$ &$0.194751$ \\

 & $0.4$& $5.300419$ & $2.103792$ &
 $0$ & $0.006772$ & 
 $0$ & $0.244141$ \\

 &$0.6$& $0$ & $2.757935$ &
 $0$ & $0.011526$ &
 $0$ & $0.304907$\\

& $0.8$ & $0$ & $3.593937$ &
$0$ & $0.019565$ & 
$0$ &$0.379285$ \\

 & $1.2$ & $8.770850$ & $5.982027$ &
 $0$ & $0.055766$ & 
 $0$ & $0.579286$ \\

 & $1.4$ & $1.928976$ & $7.627746$ &
 $0$ & $0.093447$ & 
 $0$ & $0.710742$ \\
 \cline{1-8}

  \multirow{ 8}{*}[15pt]{ $0.6$ }&$0.2$ & $0$ & $1.862337$ &
 $0$ & $0.003360$ &
 $0$ &$0.055628$ \\

 & $0.4$& $0$ & $2.413613$ &
 $0$ & $0.005783$ & 
 $0$ & $0.073870$ \\

 &$0.6$& $0$ & $3.110758$ &
 $0$ & $0.009933$ &
 $0$ & $0.097748$\\

& $0.8$ & $0$ & $3.985814$ &
$0$ & $0.017020$ & 
$0$ &$0.128855$ \\

 & $1.2$ & $2.629053$ & $6.416391$ &
 $0$ & $0.049452$ & 
 $0$ & $0.221097$ \\

 & $1.4$ & $0$ & $8.047912$ &
 $0$ & $0.083682$ & 
 $0$ & $0.287556$ \\
 \cline{1-8}
 
\end{tabular}}
\end{center}
\end{table}

\begin{table}[!h] 
\begin{center}
\caption { PRI values of $\delta_{ST}$ and $\delta_{BZ}$ for generated samples of sizes $n=10,15,20,30$ with parameters $\sigma=1,u=0.1,0.2,0.3,0.4,0.5,0.6$ and $q=0.2,0.4,0.6,0.8,1.2,1.4$ } \label{table2}
{\begin{tabular}{ >{\centering\arraybackslash}p{0.8cm} >{\centering\arraybackslash}p{0.8cm}>{\centering\arraybackslash}p{1.4cm}>{\centering\arraybackslash}p{1.5cm}>{\centering\arraybackslash}p{0.8cm}>{\centering\arraybackslash}p{1.5cm}>{\centering\arraybackslash}p{0.8cm}>{\centering\arraybackslash}p{1.5cm}>
{\centering\arraybackslash}p{0.8cm}>
{\centering\arraybackslash}p{1.5cm}}
\hline
\multicolumn{2}{c}{n} &\multicolumn{2}{c}{10}&\multicolumn{2}{c}{15}&\multicolumn{2}{c}{20}&\multicolumn{2}{c}{30}\\
\hline
  $u$ & $q$ & $\delta_{ST}$ &$\delta_{BZ}$ & $\delta_{ST}$ &$\delta_{BZ}$  & $\delta_{ST}$ &$\delta_{BZ}$ & $\delta_{ST}$  & $\delta_{BZ}$ \\ 

\cline{1-10}
 \multirow{ 8}{*}[15pt]{ $0.1$} & $0.2$&$ 0$ & $1.432167$ &
 $0$  &$0.043261$ 
 &$0$  &$0.234838$ &
 $0$ & $0.051905$\\

 &$0.4$ & $0$  & $1.566019$ &
 $0$ & $0.050131$ &
 $0$ & $0.252638$  &
 $0$   &   $0.055340$\\

 &$0.6$ & $0$ & $1.707100$ &
 $0$ & $0.057962$ &
 $0$ & $0.271303$ &
$0$  &    $0.058930$\\

&$0.8$  & $0$ & $1.854917$ &
$0$ & $0.066861$ 
& $0$ & $0.290815$ &
 $0$   &   $0.062674$\\

 &$1.2$  & $0$ & $2.167615$ &
 $0$ & $0.088317$ &
 $0$  & $0.332256$ &
 $0$   &   $0.070618$\\

 &$1.4$  & $0.731157$ & $2.330193$  &
 $0$ & $0.101109$ &
 $0$ & $0.354093$ &
 $0$   &   $0.074811$\\
 \cline{1-10}

 
 \multirow{ 8}{*}[15pt]{ $0.2$ }&$0.2$ &$0$ & $0.381504$ & 
 $0$ & $0.005446$ & 
 $0$ & $0.058143$ &
 $0$    &  $0.001554$\\

 & $0.4$& $0$ & $0.442741$ &
 $0$ & $0.006633$ & 
 $0$  & $0.064302$ &
 $0$   &  $0.001730$\\

 &$0.6$ & $0$ & $0.512169$ &
 $0$ & $0.008063$ & 
 $0$ &$0.070986$ &
 $0$   &   $0.001923$\\

& $0.8$ & $0$ & $0.590511$ &
$0$ & $0.009781$ &
$0$ &$0.078223$ &
$0$   &    $0.002136$\\

 & $1.2$ & $0$ & $0.776622$ & 
 $0$ & $0.014294$ &
 $0$ & $0.094449$ &
 $0$   &   $0.002625$\\

 &$1.4$  & $0$  & $0.885490$ &
 $0$ & $0.017218$ &
 $0$ & $0.103478$ &
$0$    &    $0.002905$\\
 \cline{1-10}

 \multirow{ 8}{*}[15pt]{ $0.3$  }&$0.2$   & $0$ & $0.351306$ &
 $0$ & $0.004089$ & 
 $0$ & $0.000264$ &
 $0$   &   $0.000405$\\

 &$0.4$ & $0$ &$0.409050$ &
 $0$ & $0.005014$ & 
 $0$ & $0.000322$ &
 $0$   &   $0.000405$\\

 &$0.6$ & $0$ &$0.474765$ &
 $0$ &$0.006136$ &
 $0$ &$0.000392$ &
$0$    &    $0.000517$\\

& $0.8$ & $0$ & $0.549201$ &
$0$ & $0.007493$ &
$0$ & $0.000475$ &
 $0$   &   $0.000584$\\

 & $1.2$ & $0$ & $0.727082$ &
 $0$ & $0.011102$ &
 $0$ & $0.000698$ &
 $0$   &   $0.000741$\\

 & $1.4$ & $0$ & $0.831744$ &
 $0$ & $0.013466$ &
 $0$ & $0.000843$&
 $0$   &  $0.000833$\\
 \cline{1-10}

 \multirow{ 8}{*}[15pt]{ $0.4$  }&$0.2$ & $0$ & $0.111910$ &
 $0$ & $0.000116$ &
 $0$ & $0.000482$ &
 $0$   &  $0.000051$\\

 &$0.4$ & $0$ & $0.136201$ &
 $0$ & $0.000155$ &
 $0$ & $0.000581$ &
 $0$   &   $0.000061$\\

 &$0.6$ & $0$ & $0.165232$ &
 $0$ & $0.000206$ &
 $0$ & $0.000699$ &
 $0$   &   $0.000072$\\

& $0.8$ & $0$ &  $0.199788$ & 
$0$ & $0.000273$ & 
$0$ & $0.000840$&
 $0$  &   $0.000086$\\

 & $1.2$ & $0$ & $0.288980$ &
 $0$ & $0.000477$ &
 $0$ & $0.001207$ &
$0$   &   $0.000120$\\

 & $1.4$ & $0$ & $0.345529$ &
 $0$ & $0.000629$ &
 $0$ & $0.001443$&
 $0$   &   $0.000142$\\
  \cline{1-10}

 \multirow{ 8}{*}[15pt]{ $0.5$ }&$0.2$ & $0$ & $0.000322$ &
 $0$ & $0.010444$ &
 $0$ &$0.000626$ &
 $0$   &   $0$\\

 & $0.4$& $0$ & $0.000480$ &
 $0$ & $0.012527$ & 
 $0$ & $0.000751$ &
 $0$  &   $0$\\

 &$0.6$& $0$ & $0.000714$ &
 $0$ & $0.014994$ &
 $0$ & $0.000905$&
 $0$   &   $0$\\

& $0.8$ & $0$ & $0.001060$ &
$0$ & $0.017907$ & 
$0$ &$0.001077$ &
 $0$   &   $0$\\

 & $1.2$ & $0$ & $0.002326$ &
 $0$ & $0.025364$ & 
 $0$ & $0.001533$ &
 $0$ &   $0.000001$\\

 & $1.4$ & $0$ & $0.003433$ &
 $0$ & $0.030074$ & 
 $0$ & $0.001825$ &
 $0$   &   $0.000001$\\
 \cline{1-10}

 \multirow{ 8}{*}[15pt]{ $0.6$ }&$0.2$ & $0$ & $0.009820$ &
 $0$ & $0.000147$ &
 $0$ &$0.000128$ &
 $0$   &   $0$\\

 & $0.4$& $0$ & $0.013044$ &
 $0$ & $0.000195$ & 
 $0$ & $0.000157$ &
 $0$  &   $0$\\

 &$0.6$& $0$ & $0.017279$ &
 $0$ & $0.000257$ &
 $0$ & $0.000194$&
 $0$   &   $0$\\

& $0.8$ & $0$ & $0.022825$ &
$0$ & $0.000339$ & 
$0$ &$0.000238$ &
 $0$   &   $0$\\

 & $1.2$ & $0$ & $0.039449$ &
 $0$ & $0.000586$ & 
 $0$ & $0.000359$ &
 $0$ &   $0$\\

 & $1.4$ & $0$ & $0.051586$ &
 $0$ & $0.000769$ & 
 $0$ & $0.000439$ &
 $0$   &   $0$\\
 \cline{1-10}

\end{tabular}}
\end{center}
\end{table}

\end{document}